\newtheorem{thm}{Theorem}[section]
\newtheorem{lem}[thm]{Lemma}
\newtheorem{prop}[thm]{Proposition}
\theoremstyle{definition}
\theoremstyle{remark}
\newtheorem{rem}[thm]{Remark}
\numberwithin{equation}{section}
\newcommand{\al}{\alpha}
\newcommand{\de}{\delta}
\newcommand{\si}{\sigma}
\newcommand{\ga}{\gamma}
\newcommand{\La}{\Lambda}
\newcommand{\la}{\lambda}
\newcommand{\Si}{\Sigma}
\newcommand{\Om}{\Omega}
\newcommand{\om}{\omega}
\renewcommand{\th}{\theta}
\newcommand{\s}{\mathbf s}
\newcommand{\C}{\mathbb C}
\newcommand{\D}{\mathbb D}
\newcommand{\N}{\mathbb N}
\newcommand{\Z}{\mathbb Z}
\newcommand{\Q}{\mathbb Q}
\newcommand{\del}{\partial}
\newcommand{\Diff}{Diff}
\newcommand{\Dehn}{Dehn}
\newcommand{\co}{\thinspace\colon}
\newcommand{\x}{\times}
\newcommand{\hra}{\hookrightarrow}
\DeclareMathOperator{\lk}{lk}
\DeclareMathOperator{\id}{id}
\title[Stein fillable contact 3--manifolds and positive open books]
{Stein fillable contact 3--manifolds and\\positive open books of genus one}
\author{Paolo Lisca}
\begin{document}

\begin{abstract} 
A 2--dimensional open book $(S,h)$ determines a closed, oriented 3--manifold $Y_{(S,h)}$
and a contact structure $\xi_{(S,h)}$ on $Y_{(S,h)}$. The contact structure $\xi_{(S,h)}$ is Stein fillable if  
$h$ is {\em positive}, i.e.~$h$ can be written as a product of right--handed Dehn twists. Work of 
Wendl implies that when $S$ has genus zero the converse holds, that is 
\begin{equation}\label{e:1}\tag{$*$}
\xi_{(S,h)}\ \text{Stein fillable}\quad \Longrightarrow \quad h\ \text{positive}. 
\end{equation}
On the other hand, results by Wand~\cite{Wa09} and by Baker, Etnyre and Van Horn--Morris~\cite{BEV10} 
imply the existence of counterexamples to~\eqref{e:1} with $S$ of arbitrary genus strictly greater than one. 
The main purpose of this paper is to prove~\eqref{e:1} under the assumption that $S$ is a one--holed torus 
and $Y_{(S,h)}$ is a Heegaard Floer $L$--space. 
\end{abstract}

\maketitle

\section{Introduction}\label{s:intro} 

A {\em Stein surface} can be defined as a triple $(W,J,\varphi)$, where $W$ is a smooth, 
non--compact 4--manifold, $J$ is an integrable complex structure on $W$ 
(viewed as a bundle automorphism $J\co TW\to TW$) and $\varphi\co W\to [0,+\infty)$ is a smooth, proper function 
such that, setting $\la:=-J^* d\varphi\in\Om^1(W)$, the exact 2--form $\om_\varphi := d\la\in\Om^2(W)$ 
is everywhere non--degenerate, hence an exact symplectic form on $W$. A basic example is the 
triple $(\C^2, J_0, \sum_{i=1}^2 |z_i|^2)$, where $J_0$ is the standard complex structure on $\C^2$. 
If $c\in (0,+\infty)$ is a regular value of $\varphi$, the sublevel set $W_c:=\varphi^{-1}([0,c])$ is usually called a  
{\em Stein 4--manifold with boundary}. The restriction $\la_c:=\la|_{TW_c}\in\Om^1(\del W_c)$ satisfies 
$\la_c\wedge d\la_c >0$; in other words, the 2--plane distribution 
$\xi_{\del W_c}:=\ker(\la_c)\subset T\del W_c$ consisting of the complex lines tangent to $\del W_c$ 
is a positive contact structure on the oriented 3--manifold $\del W_c$. For more details on the basic notions 
in symplectic and contact topology recalled in this introduction we refer the 
reader to the book~\cite{Ge08} and the references therein. 

A contact 3--manifold $(Y,\xi)$ is called {\em Stein fillable} if it is orientation--preserving diffeomorphic 
to a pair $(\del W_c,\xi_{\del W_c})$ as above. In this situation we might simply say that {\em $\xi$ is a Stein 
fillable contact structure}. A typical source of Stein fillable contact structures is given by positive open books, 
defined below. 

An {\em abstract open book} is a pair $(\Si, h)$, where $\Si$ is an oriented surface with $\del\Si\neq\emptyset$ 
and $h$ is an element of the group $\Diff^+(\Si,\del\Si)$ of orientation--preserving diffeomorphisms of 
$\Si$ which restrict to the identity on the boundary. We will abusively confuse a differomorphism such 
as $h$ with its isotopy class modulo isotopies which fix $\del\Si$ pointwise. To the open book $(\Si,h)$ 
one can associate a closed, oriented 3--manifold $Y_{(\Si,h)}$ by taking the natural filling of the 
mapping cylinder of $h$: 
\[
Y_{(\Si,h)} := \Si\x [0,1]/(p,1) \sim (h(p),0) \cup_{\del} \del\Si\x D^2
\] 
The link $L:=\del \Si\x \{0\}\subset Y_{(\Si,h)}$ is fibered, with fibration $\pi\co Y_{(\Si,h)}\setminus L\to S^1$ 
given by the obvious extension of the natural projection 
\[
\Si\x [0,1]/(p,1) \sim (h(p),0)\to S^1=[0,1]/1\sim 0. 
\]
The pair $(L,\pi)$ is an {\em open book decomposition} of $Y_{(\Si,h)}$ with {\em binding} $L$ and 
pages $\Si_\th:= \overline{\pi^{-1}(\th)}$, $\th\in S^1$. 
The 3--manifold $Y_{(\Si,h)}$ carries a contact form $\la$ such that $\la|_L>0$ and $d\la|_{\Si_\th}>0$ for 
each $\th\in S^1$, with the contact structure $\xi_{(\Si,h)}=\ker\la\subset TY_{(\Si,h)}$ uniquely determined 
up to diffeomorphisms by the conjugacy class of $h$ in $\Diff^+(\Si,\del\Si)$.  Moreover, the 
map $(\Si, h)\mapsto (Y_{(\Si,h)}, \xi_{(\Si,h)})$ is surjective but not injective~\cite{Gi02}. 

We say that $h$ is {\em positive} if either $h=\id_\Si$ or $h=\de_{\ga_1}\cdots\de_{\ga_k}$, where 
$\ga_i\subset\Si$, $i=1,\ldots, k$, is a simple closed curve and $\de_{\ga_i}\in Diff^+(\Si,\del\Si)$ 
is a right--handed Dehn twist along $\ga_i$. 
We denote by $\Dehn^+(\Si,\del\Si)\subseteq\Diff^+(\Si,\del\Si)$ the monoid of positive, orientation--preserving 
diffeomorphisms of the pair $(\Si,\del\Si)$. When $h\in\Dehn^+(\Si,\del\Si)$ we say that the open 
book $(\Si,h)$ is {\em positive}. By~\cites{LP01, Gi02} (see also~\cites{AO01, AO01err} and~\cite{Pl04}*{Appendix~A}) 
we have the well--known fact that if $h\in\Dehn^+(\Si,\del\Si)$ then $\xi_{(\Si,h)}$ is Stein fillable, which leads 
naturally to the following Basic Question: 
\begin{equation}\label{e:bq}
 \xi_{(\Si,h)}\ \text{Stein fillable} \ \Longrightarrow\ h\in\Dehn^+(\Si,\del\Si)\ ?
\end{equation}
By~\cite{We10}, it is known that the answer to~\eqref{e:bq} is `yes' when $\Si$ is a planar 
surface, while in~\cite{Wa09} and~\cite{BEV10} are constructed examples with $g(\Si)=2$ 
for which the answer to~\eqref{e:bq} is `no'. Moreover, John Etnyre has observed~\cite{Epc13} that the examples 
of~\cites{Wa09, BEV10} can be used to easily construct similar examples for any genus 
$g(\Si)\geq 3$. We included a short sketch of his argument in Remark~\ref{r:highergenus}. 

The purpose of this paper is to prove Theorem~\ref{t:main} below, which shows that 
the answer to~\eqref{e:bq} is positive when $g(\Si)=1$, $\Si$ has connected boundary and $Y_{(\Si,h)}$ is a 
Heegaard Floer $L$--space. Recall that a closed, oriented 3--manifold $Y$ is a Heegaard Floer $L$--space, 
or simply an {\em $L$--space}, if $Y$ is a rational homology 3--sphere such that the rank of the Heegaard Floer group 
$\widehat{HF}(Y;\Z)$ (defined in~\cite{OS04}) equals the order of the finite group $H_1(Y;\Z)$. It is a well--known 
fact that the simplicity of the Heegaard Floer groups of an $L$--space $Y$ makes it possible, in certain situations, 
to gather useful information about the Stein fillings of $Y$ (cf.~\cites{OSgb04, Ba08, GLL11}). We will exploit 
this fact to prove the following. 
\begin{thm}\label{t:main} 
Let $T$ be an oriented, one--holed torus, $h\in\Diff^+(T,\del T)$, and suppose that 
$Y_{(T,h)}$ is a Heegaard Floer $L$--space. Then, 
\[
\xi_{(T,h)}\ \text{Stein fillable} \ \Longrightarrow\ h\in\Dehn^+(T,\del T).
\] 
\end{thm}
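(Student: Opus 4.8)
The plan is to exploit the exceptionally concrete structure of the mapping class group of the one-holed torus. Writing $\al,\be\subset T$ for simple closed curves meeting once, the group $\Diff^+(T,\del T)$ is generated by the right-handed Dehn twists $\de_\al,\de_\be$ subject only to the braid relation $\de_\al\de_\be\de_\al=\de_\be\de_\al\de_\be$, so it is isomorphic to the braid group $B_3$, a central $\Z$--extension of $SL(2,\Z)$ in which the kernel of the projection $B_3\to SL(2,\Z)$ is generated by the boundary twist $\de_{\del T}=(\de_\al\de_\be)^6$. I would first record that $\Dehn^+(T,\del T)$ is closed under conjugation, since a conjugate of the right-handed twist along a curve $\ga$ is the right-handed twist along its image; as $Y_{(T,h)}$, $\xi_{(T,h)}$, the $L$--space property and Stein fillability all depend only on the conjugacy class of $h$, the entire statement is conjugation invariant. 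This lets me pass to normal forms: using the Nielsen--Thurston trichotomy for the image $\bar h\in SL(2,\Z)$ (periodic/reducible/pseudo-Anosov, according as $|\mathrm{tr}\,\bar h|$ is $<2$, $=2$, or $>2$) together with the fractional Dehn twist coefficient of $h$ (the translation number in the central direction), every class is described by the trace type of $\bar h$ and an integer recording the central power.

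Next I would impose the necessary condition coming from tightness. Stein fillable implies tight, and by Honda--Kazez--Mati\'c a monodromy supporting a tight contact structure must be right-veering, a condition which in this genus-one setting can be read off from $\bar h$ and the fractional Dehn twist coefficient. This already discards the conjugacy classes with negative central power and those whose $SL(2,\Z)$--part is negatively twisted, cutting the problem down to a structured family of candidates in each Nielsen--Thurston stratum. The crucial point, however, is that right-veering is \emph{strictly} weaker than membership in $\Dehn^+(T,\del T)$ --- this is precisely the gap through which the higher-genus counterexamples of Wand~\cite{Wa09} and of Baker--Etnyre--Van Horn-Morris~\cite{BEV10} pass --- so tightness alone cannot finish the argument.

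The Floer-theoretic input is where the $L$--space hypothesis enters and does the essential work. Since $\xi_{(T,h)}$ is fillable its Ozsv\'ath--Szab\'o contact class $c(\xi_{(T,h)})\in\wHF(-Y_{(T,h)})$ is non-zero; because $Y_{(T,h)}$ is an $L$--space, $\wHF$ has minimal rank and the associated $d$--invariants are completely rigid. Using the computation of $\wHF$ and of the contact invariant for genus-one, one-boundary open books (cf.~\cite{Ba08}), I would show that the combined conditions ``$Y_{(T,h)}$ an $L$--space'' and ``$c(\xi_{(T,h)})\ne 0$'' single out, inside each stratum from the previous step, exactly the conjugacy classes realized by a positive word; in the pseudo-Anosov stratum this amounts to bounding the negative syllables of a normal form, since a genuine left-handed twist would either destroy the $L$--space property or kill the contact invariant. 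The topological counterpart, used to control the candidate fillings, is that a Stein filling of an $L$--space has $b_1=0$ and negative definite intersection form whose lattice embeds into a diagonal one; feeding this rigidity back through the handle decomposition supplied by the open book bounds the complexity of $h$.

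Finally, for each surviving conjugacy class I would exhibit an explicit factorization of $h$ into right-handed Dehn twists, using the braid and chain relations in $B_3$ together with $(\de_\al\de_\be)^6=\de_{\del T}$ to absorb boundary twists, thereby certifying $h\in\Dehn^+(T,\del T)$. I expect the main obstacle to be the third step: proving a Wendl-type rigidity in genus one, namely that under the $L$--space hypothesis every Stein filling is forced to arise from a positive factorization of $h$. This is genuinely delicate precisely because the analogous statement fails without the $L$--space assumption and fails in higher genus; the heart of the matter is to show that the simplicity of $\wHF$ of an $L$--space, transmitted through the non-vanishing and rigidity of the contact invariant, leaves no room for a right-veering-but-non-positive monodromy.
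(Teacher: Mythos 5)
There is a genuine gap at the step you yourself flag as the ``main obstacle,'' and the claim you make there is in fact false. You assert that, within each Nielsen--Thurston stratum, the combined conditions ``$Y_{(T,h)}$ an $L$--space'' and ``$c(\xi_{(T,h)})\neq 0$'' single out \emph{exactly} the conjugacy classes realized by a positive word. They do not. Those Floer-theoretic conditions (together with $b_2^-\geq 0$ for a putative filling) only narrow $h$ down to the three families of Proposition~\ref{p:steinfill}; the first two families are indeed positive, but the third family contains non-positive elements. For instance $(xy)^3xy^{-1}xy^{-5}$ satisfies all of these conditions yet is not in $\Dehn^+(T,\del T)$ (see Remark~2.2 and the reference to~\cite{HKM08} there); consistently with the theorem, that element is also not Stein fillable. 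So no amount of rigidity of $\wHF$, $d$--invariants, or the contact class can finish the argument: the contact invariant is a property of the contact structure, and it cannot separate ``fillable'' from ``tight with nonvanishing invariant'' within family (3). The essential missing input is a genuinely four-dimensional use of the \emph{existence} of a Stein filling $W$, not just of its consequences for $c(\xi)$.

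Concretely, the paper bridges this gap by a Donaldson-type lattice argument. One builds an explicit 4--manifold $X$ with $\del X=Y_{(T,h)}$ from a surgery presentation adapted to the open book (framings $c_i=a_i+2$ or $2$), caps off the Stein filling to form the closed negative definite manifold $M=W\cup(-X)$, and applies Donaldson's diagonalization theorem to obtain an isometric embedding $Q_{-X}\hookrightarrow\D_K$ carrying a distinguished characteristic vector (pinned down via the Spin structure induced by $\xi$) to a characteristic element. A combinatorial analysis of such embeddings then shows the string $(c_1,\ldots,c_N)$ dominates a string obtained from $(0,0)$ by iterated blowups, and a separate, explicit braid-relation construction shows that any such dominating string yields a positive factorization of $h$. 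Your outline gestures at ``negative definite intersection form whose lattice embeds into a diagonal one ... bounds the complexity of $h$,'' but bounding complexity is not the point: the lattice embedding must be converted into a positive factorization, and without that conversion (and without the auxiliary manifold $X$ and the characteristic-vector bookkeeping) the argument does not close. Your steps 1, 2 and 4 are broadly consistent with the paper's reductions via Murasugi normal forms and Baldwin's computations, but as written the proposal does not constitute a proof.
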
 

The sub--monoid $\Dehn^+(T,\del T)\subset\Diff^+(T,\del T)$ and the Basic Question~\ref{e:bq}
were also considered in~\cite{HKM08}. In~\cite{HKM09} the authors gave a characterization 
of the elements $h\in\Diff^+(T,\del T)$ such that $\xi_{(T,h)}$ is a tight contact structure. 
The proof of Theorem~\ref{t:main} provides an explicit characterization of the elements 
$h\in\Dehn^+(T,\del T)$ such that $Y_{(T,h)}$ is a Heegaard Floer $L$--space (see 
the statements of Proposition~\ref{p:steinfill} and~\ref{t:newmain}). This should be compared with 
the known algorithm to establish the quasi--positivity of a closed 3--braid~\cite{Or04} (as explained 
in Section~\ref{s:background}, $\Diff^+(T,\del T)$ is isomorphic to the group of closed 3--braids). 

The paper is organized as follows. In Section~\ref{s:background} we recall some previously known 
results and we use them to show that Theorem~\ref{t:main} is implied by Theorem~\ref{t:newmain}. 
In Section~\ref{s:positivity} we prove the first half of Theorem~\ref{t:newmain}, and in Sections~\ref{s:construction} 
and~\ref{s:final} we prove the second half. 

{\bf Acknowledgements:} the author wishes to thank John Etnyre for pointing 
out Remark~\ref{r:highergenus}. The present work is part of the author's activities within CAST, a Research Network
Program of the European Science Foundation, and the PRIN--MIUR research project 2010--2011 ``Variet\`a 
reali e complesse: geometria, topologia e analisi armonica''. 

\section{Recollection of previous results and a refinement of Theorem~\ref{t:main}}\label{s:background}

Let $x, y \in\Diff^+(T,\del T)$ be right--handed Dehn twists along two simple closed curves in $T$ 
intersecting transversely once. Then, $\Diff^+(T,\del T)$ is generated by $x$ and $y$ subject to the 
relation $xyx=yxy$. We shall denote by $\exp\co\Diff^+(T,\del T)\to\Z$ the ``exponent--sum'' homomorphism 
defined on an element $h$ by first writing $h$ as a product of powers of $x, y$ and then taking $\exp(h)$ 
to be the sum of the exponents of $x$ and $y$ appearing in the product. This is a good definition because two 
such factorizations of $h$ are obtained from each other via finitely many applications of the homogeneous 
relation $xyx=yxy$. It is possible to check that there is an isomorphism from the 3--strand braid group $B_3$
onto $\Diff^+(T,\del T)$ sending $\si_1$ to  $x$ and $\si_2$ to $y$, where $\si_i\in B_3$, $i=1,2$, 
are the standard generators. Such isomorphism can be realized geometrically by viewing $T$ as a 
two--fold branched cover over the $2$--disk with three branch points: elements of $B_3$, viewed 
as automorphisms of the triply--pointed disk lift uniquely to elements of $\Diff^+(T,\del T)$. In our 
notation a product $\si_1\si_2\in B_3$, when viewed as composition of automorphisms, should be 
interpreted as {\em first} applying $\si_1$ and {\em then} $\si_2$. For this reason, throughout the paper, 
when we write the composition of two elements $\phi,\psi\in\Diff^+(T,\del T)$ as $\phi\psi$  we shall mean 
$\phi$ {\em followed by} $\psi$. The classification of $3$--braids due to Murasugi~\cite{Mu74} implies 
that each element of $\Diff^+(T,\del T)$ is conjugate to one of the following: 
\begin{itemize} 
\item 
$(xy)^{3d} x^{-m} y^{-1}$, $d\in\Z$, $m\in\{1,2,3\}$; 
\item 
$(xy)^{3d} y^m$, $d\in \Z$, $m\in\Z$; 
\item
$(xy)^{3d} x^{a_1} y^{-b_1}\cdots x^{a_n} y^{-b_n}$, $a_i, b_i, d\in\Z$, $a_i, b_i, n\geq 1$.  
\end{itemize}

The following statement is proved by combining results from~\cites{OSgb04, OS05, Pl04, Ba08}.
\begin{prop}\label{p:steinfill}
Let $h\in\Diff^+(T,\del T)$, suppose that $Y_{(T,h)}$ is a Heegaard Floer $L$--space and 
that $(W,J)$ is a Stein filling of $\xi_{(T,h)}$. Then, $c_1(W,J)=0$, $b_2^+(W)=0$, 
$b_2^-(W) = \exp(h) - 2$ and $h$ is conjugate to one of the following: 
\begin{enumerate} 
\item 
$(xy)^{3d} x^{-m} y^{-1}$, $d\in\{1,2\}$, $m\in\{1,2,3\}$; 
\item 
$(xy)^3 y^m$, $m\geq -4$; 
\item
$(xy)^3 x^{a_1} y^{-b_1}\cdots x^{a_n} y^{-b_n}$, $a_i,b_i\in\N$, $n\geq 1$, 
$\sum_{i=1}^n a_i + 4 \geq \sum_{i=1}^n b_i$.  
\end{enumerate}
Moreover, in the first two cases $h\in\Dehn^+(T,\del T)$. 
\end{prop}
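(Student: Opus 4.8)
The plan is to read the invariants of an arbitrary Stein filling off the Heegaard Floer homology of $Y_{(T,h)}$, relying on Baldwin's computation~\cite{Ba08} together with the $L$--space structure theory of~\cites{OSgb04, OS05} and Plamenevskaya's analysis of the contact invariant under fillings~\cite{Pl04}. First I would use that, $\xi_{(T,h)}$ being Stein fillable, its Ozsv\'ath--Szab\'o contact invariant $c(\xi_{(T,h)})\in\wHF(-Y_{(T,h)})$ is nonzero and is carried by the $\Spin^c$ structure $\s_J$ induced by the filling, which restricts on the boundary to the canonical $\Spin^c$ structure $\mathfrak t_\xi$ of $\xi_{(T,h)}$. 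Puncturing $W$ turns it into a cobordism inducing a map on $\wHF$ that sends a generator to $c(\xi_{(T,h)})$; since a cobordism map on $\wHF$ vanishes once $b_2^+>0$, the non--vanishing of $c(\xi_{(T,h)})$ forces $b_2^+(W)=0$. A standard modification of the handle decomposition of the Stein domain lets me also assume $b_1(W)=0$, so that the intersection form of $W$ is negative definite.

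Next I would determine $c_1$ and $b_2^-$ by playing two computations of the three--dimensional homotopy invariant $d_3(\xi_{(T,h)})$ against each other. On the filling side, the standard expression $d_3(\xi_{(T,h)})=\tfrac14\bigl(c_1(\s_J)^2-3\sigma(W)-2\chi(W)\bigr)+\tfrac12$ together with $\sigma(W)=-b_2^-(W)$ and $\chi(W)=1+b_2^-(W)$ collapses to $4\,d_3(\xi_{(T,h)})=c_1(\s_J)^2+b_2^-(W)$. On the Heegaard Floer side, the grading carried by the nonzero class $c(\xi_{(T,h)})$ inside the rank--one group $\wHF(-Y_{(T,h)},\mathfrak t_\xi)\cong\Z$ is governed by the $d$--invariant $d(Y_{(T,h)},\mathfrak t_\xi)$ supplied by~\cite{Ba08}, while the negative definite inequality of~\cites{OSgb04, OS05} reads $c_1(\s_J)^2+b_2^-(W)\le 4\,d(Y_{(T,h)},\mathfrak t_\xi)$. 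The crux is to show, using Plamenevskaya's identification of the filling class~\cite{Pl04}, that this inequality is in fact an equality for $\s_J$; granting that, one gets $c_1(\s_J)^2=0$, and since the form is negative definite this forces $c_1(W,J)=c_1(\s_J)=0$. Feeding this back, the value of $d_3(\xi_{(T,h)})$ read off from the open book in terms of the exponent sum yields $b_2^-(W)=\exp(h)-2$.

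With these equalities in hand, the inequality $b_2^-(W)=\exp(h)-2\ge 0$ becomes $\exp(h)\ge 2$, which I would combine with Murasugi's classification~\cite{Mu74} of the conjugacy classes in $\Diff^+(T,\del T)\cong B_3$. Evaluating the exponent sum on the three Murasugi families gives $6d-m-1$, $6+m$ and $6+\sum_i a_i-\sum_i b_i$, so $\exp(h)\ge 2$ immediately reproduces the bounds $m\ge -4$ in the second family and $\sum_i a_i+4\ge\sum_i b_i$ in the third. The remaining restrictions---$d\in\{1,2\}$ in the first family and $d=1$ in the other two---are precisely where Baldwin's explicit determination of which of these manifolds are $L$--spaces, together with the non--vanishing of $c(\xi_{(T,h)})$, enters: the excluded values of $d$ either violate the $L$--space condition or kill the contact invariant.

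Finally, for the first two families I would exhibit explicit positive factorizations, using that positivity is a conjugacy invariant---a conjugate of a right--handed Dehn twist is again a right--handed Dehn twist along the image curve---so that it suffices to rewrite each normal form as a positive word. Since $(xy)^3$ is the central, right--handed boundary twist, the finitely many negative letters in $x^{-m}y^{-1}$ (with $m\in\{1,2,3\}$) or in $y^m$ (with $-4\le m<0$) can be absorbed into $(xy)^{3d}$ by repeated use of $xyx=yxy$; for instance $(xy)^3y^{-1}=xyxyx$ and $(xy)^3y^{-4}=x\,(y^2xy^{-2})$ are manifestly products of right--handed Dehn twists, and the remaining cases reduce to equally short checks. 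The main obstacle is the second step: squeezing the exact equalities $c_1(W,J)=0$ and $b_2^-(W)=\exp(h)-2$---rather than mere inequalities---out of the interplay between the contact--invariant grading, the $d_3$ formula, the negative definite $d$--invariant inequality, and Baldwin's homology computation; everything else is then either a finite verification or a direct appeal to the cited results.
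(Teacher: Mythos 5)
Your overall architecture matches the paper's: $b_2^+(W)=0$ from the $L$--space condition, $b_1(W)=0$ from the handle structure of Stein domains, Murasugi's normal forms filtered by $\exp(h)\geq 2$ together with Baldwin's determination of which monodromies give $L$--spaces with nonvanishing contact invariant, and explicit positive words for cases (1) and (2) (your two sample identities are essentially the ones the paper uses, and the reduction to the extremal cases via conjugates of $x$ and $y$ is the same). The paper, however, obtains the two equalities $c_1(W,J)=0$ and $b_2^-(W)=\exp(h)-2$ by directly citing Baldwin's Theorem~7.1 and Proposition~5.1 (which in turn rest on Plamenevskaya's results), whereas you attempt to rederive them, and that is exactly where your argument has a genuine hole --- one you yourself flag as ``the crux'' and ``the main obstacle.''

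The problem is structural, not just a missing detail. Every relation you write down --- the $d_3$ formula, which after substituting $\sigma(W)=-b_2^-(W)$ and $\chi(W)=1+b_2^-(W)$ reads $4\,d_3(\xi_{(T,h)})=c_1(\s_J)^2+b_2^-(W)$, and the negative definite inequality $c_1(\s_J)^2+b_2^-(W)\leq 4\,d(Y_{(T,h)},\mathfrak t_\xi)$ --- constrains only the single combination $c_1(\s_J)^2+b_2^-(W)$. Promoting the inequality to an equality therefore pins down that sum, but cannot separate it into $c_1(\s_J)^2=0$ and $b_2^-(W)=\exp(h)-2$: a priori one could have $c_1(\s_J)^2=-2k$ and $b_2^-(W)=\exp(h)-2+2k$ for some $k>0$. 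Extracting the vanishing of $c_1$ requires an input beyond grading bookkeeping; in Baldwin's Theorem~7.1 this comes from Plamenevskaya's finer theorem identifying exactly which $\Spin^c$ structures on the filling carry the contact class, combined with the $L$--space hypothesis. Either supply that argument or, as the paper does, cite the result. (Two smaller points: the vanishing of cobordism maps when $b_2^+>0$ is cleanest run through $HF^+$ together with $HF^+_{\mathrm{red}}=0$ for an $L$--space, rather than through $\wHF$; and $(xy)^3$ is the central half--twist, not the boundary Dehn twist, which is $(xy)^6$ --- neither issue affects your argument.)
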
 

\begin{proof} 
By~\cite{OSgb04}*{Theorem~1.4} any symplectic filling $W$ of an $L$--space satisfies $b_2^+(W)=0$. 
The fact that $c_1(W,J)=0$ follows from the results of~\cite{Pl04}, as shown in the proof 
of~\cite{Ba08}*{Theorem~7.1}. It is a well--known fact that Stein 4--manifolds admit handle decompositions with only 
$0$--, $1$-- and $2$--handles. Since the assumption that $Y_{(T,h)}$ is an $L$--space implies $b_1(Y_{(T,h)})=0$ 
and a handle decomposition of $W$ can be viewed dually as obtained from $Y_{(T,h)}$ by attaching handles of 
index at least $2$, it follows that $b_1(W)=0$. Therefore, the Euler characteristic of $W$ satisfies $\chi(W) = 1 + b_2^-(W)$.  
Finally, combining~\cite{Ba08}*{Proposition~5.1} and~\cite{Ba08}*{Theorem~7.1}
we get $\exp(h) - 2 = \chi(W) - 1$, obtaining the first part of the statement.
In~\cite{Ba08} Baldwin determined the elements $h\in\Diff^+(T,\del T)$ such that the 3--manifold $Y_{(T,h)}$ 
is an $L$--space, as well as those such that the contact structure $\xi_{(T,h)}$ has non--vanishing 
contact invariant, a property which is always satisfied by Stein fillable contact structures~\cite{OS05}. 
The combination of Theorems~4.1 and~4.2 from~\cite{Ba08} together with the fact that $0\leq b_2^-(W)=\exp(h)-2$ 
immediately yields the fact that $h$ must be conjugate to one of the elements in $(1)$, $(2)$ or $(3)$ of the statement. 

To verify the last part of the statement, in Case (1) it clearly suffices to check that $(xy)^3 x^{-3} y^{-1}$ is positive.
Since a conjugate of either $x$ or $y$ is a right--handed Dehn twist, it is enough to express this element 
as a product of conjugates of $x$ and $y$.  
Indeed, using the relation $xyx=yxy$ it is easy to verify that 
\[
(xy)^3 x^{-3} y^{-1} = yx^2 y x^{-1} y^{-1} = y ( x (xyx^{-1}))y^{-1} = 
yxy^{-1}\cdot (yx) y (yx)^{-1}.
\]
For Case (2), it suffices to check that $(xy)^3 y^{-4}$ is positive. 
As before, we just need the relation $xyx=yxy$:  
\[
xyxyxyy^{-4} = xyyxyyy^{-4} = x\cdot y^2 x y^{-2}. 
\]
\end{proof}

\begin{rem} 
Not all the elements of Case~$(3)$ in Proposition~\ref{p:steinfill} are in $\Dehn^+(T,\del T)$. 
For instance, the element $(xy)^3 x y^{-1} x y^{-5}$ satisfies the conditions of Case~$(3)$ but it is conjugate 
to the element considered in~\cite{HKM08}*{Subsection~2.5} and shown there to be non--positive. Many more 
such examples exist, as follows from Theorem~\ref{t:newmain} below. 
\end{rem} 

By Proposition~\ref{p:steinfill}, in order to establish Theorem~\ref{t:main} it suffices to prove its statement 
for the elements $h\in\Diff^+(T,\del T)$ conjugate to those of the form (3) in the proposition. In fact, 
we will prove a refinement of the statement of Theorem~\ref{t:main}, stated as Theorem~\ref{t:newmain} below, 
which gives a characterization of the positive elements. 

Now we need to introduce some notation in order to state Theorem~\ref{t:newmain}.
Let $\N$ be the set of (positive) natural numbers, and let $k\in\N$.
We say that $\hat z\in\N^{k+1}$ is a  {\em blow--up} of $z=(n_1,\ldots,n_k)\in\N^k$ 
if  
\[
\hat z = 
\begin{cases} 
(1,n_1+1,n_2,\ldots,n_{k-1},n_k+1),\ \text{or} \\
(n_1,\ldots,n_i+1,1,n_{i+1}+1,\ldots,n_k),\ \text{for some}\ 1\leq i < k,\ \text{or} \\
(n_1+1,n_2,\ldots,n_{k-1},n_k+1,1).  
\end{cases} 
\]
We will use the notation $\hat z\rightarrow z$ to denote the fact that $\hat z$ is a blowup of $z$, and 
the notation 
\[
(s_1,\ldots,s_N)\rightarrow\stackrel{\text{blowup}}{\cdots}\rightarrow (0,0).
\]
to indicate that the $N$--tuple $(s_1,\ldots,s_N)\in\N^N$ can be obtained from $(0,0)$ via a sequence of successive blowups.
For example, we have $(2,3,1,2,3,1)\rightarrow\stackrel{\text{blowup}}{\cdots}\rightarrow (0,0)$, because there is 
the sequence of blowups:  
\[
(2,3,1,2,3,1)\rightarrow (1,3,1,2,2)\rightarrow (2, 1, 2, 1)\rightarrow (1, 1, 1)\rightarrow (0,0).
\]
\begin{thm}\label{t:newmain}
Let $h=(xy)^3 x^{a_1} y^{-b_1}\cdots x^{a_n} y^{-b_n}\in \Diff^+(T,\del T)$,  $a_i,b_i,n\geq 1$, 
and\\ $N:=\sum_{i=1}^n b_i\geq 2$. If $N=1$ then $h\in\Dehn^+(T,\del T)$. 
If $N\geq 2$ then the following are equivalent: 
\begin{enumerate} 
\item
$h\in\Dehn^+(T,\del T)$;
\item
$(Y_{(T,h)}, \xi_{(T,h)})$ is Stein fillable;
\item 
There is a sequence of blowups $(s_1,\ldots,s_N)\rightarrow\stackrel{\text{blowup}}{\cdots}\rightarrow (0,0)$ such that, 
setting\\ $(c_1,\ldots, c_N):= (a_1+2,\overbrace{2,\ldots,2}^{b_1-1},a_2+2,\ldots,a_n+2,\overbrace{2,\ldots,2}^{b_n-1})$, 
we have
\[c_1\geq s_1,\ c_2\geq s_2,\ \ldots, c_N\geq s_N\] 
\end{enumerate} 
\end{thm}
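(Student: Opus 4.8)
The plan is to prove Theorem~\ref{t:newmain} by establishing the chain of implications $(3)\Rightarrow(1)\Rightarrow(2)\Rightarrow(3)$, together with the easy $N=1$ case. Let me sketch how I would attack each piece.

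Let me first understand the combinatorial data. We have $h=(xy)^3 x^{a_1}y^{-b_1}\cdots x^{a_n}y^{-b_n}$, and we form the tuple $(c_1,\ldots,c_N)$ which records the $x$-exponents (shifted by $2$) interspersed with the right number of $2$'s coming from the negative $y$-powers. The blow-up relation $\hat z\to z$ is an inverse Riemann-surface-style blow-up on tuples, and the condition in $(3)$ is that our tuple $(c_1,\ldots,c_N)$ dominates termwise some tuple obtainable from $(0,0)$ by iterated blow-ups. These dominating/blow-up tuples are exactly the ones that arise from linear chains of $(-2)$- and lower spheres, i.e. the combinatorics of lens-space or plumbing resolutions — so the blow-up condition should be a bookkeeping device for when the monodromy factors into positive Dehn twists.

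The $N=1$ case is handled directly: when $N=\sum b_i = 1$ we must have $n=1$ and $b_1=1$, so $h=(xy)^3 x^{a_1}y^{-1}$ with $a_1\geq 1$; I would verify positivity by an explicit factorization in the braid/mapping-class group using only the relation $xyx=yxy$, exactly in the style of the computations already carried out in the proof of Proposition~\ref{p:steinfill}. For the main equivalence, the implication $(1)\Rightarrow(2)$ is free, since positive open books are Stein fillable (as recalled in the introduction). The implication $(2)\Rightarrow(3)$ is where the $L$-space and Heegaard Floer input enters: a Stein filling of an $L$-space has $b_2^+=0$ and $c_1=0$ by Proposition~\ref{p:steinfill}, and the intersection form of such a filling is negative definite. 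By Elkies' / Donaldson-type diagonalization together with the explicit identification of the filling's intersection lattice in terms of the monodromy exponents, I expect that the embedding of this definite lattice into the standard diagonal lattice forces precisely the existence of the blow-up sequence; the tuple $(c_1,\ldots,c_N)$ records the self-intersections of a chain of spheres, and a negative-definite embedding of that chain is equivalent to the termwise domination of a blow-up sequence down to $(0,0)$. This translation between lattice embeddings and the blow-up calculus is the main obstacle, and I anticipate it occupying Sections~\ref{s:construction} and~\ref{s:final}.

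Finally, for $(3)\Rightarrow(1)$ — likely the content of Section~\ref{s:positivity} — the plan is to run an induction on $N$ along the blow-up sequence. The base case is $(0,0)$, corresponding to $(xy)^3$ (or the root monodromy), which is visibly positive. For the inductive step I would show that each elementary blow-up $\hat z\to z$ of tuples corresponds, on the level of monodromies, to inserting one extra right-handed Dehn twist in a controlled way that preserves positivity, so that if the monodromy associated to $z$ is positive then so is the one associated to $\hat z$; the termwise inequalities $c_i\geq s_i$ then let me absorb the excess $x$-twists as additional positive Dehn twists. The delicate point here is to match the three cases in the definition of blow-up with three explicit positive-factorization moves in $\Diff^+(T,\del T)$, each verified by a short computation with the braid relation. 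I expect the hardest single step overall to be the lattice-embedding argument in $(2)\Rightarrow(3)$, since it must extract the full combinatorial blow-up structure purely from the definiteness and $c_1=0$ constraints on the filling.
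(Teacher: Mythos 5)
Your overall architecture --- $(1)\Rightarrow(2)$ for free, $(3)\Rightarrow(1)$ by induction along the blow--up sequence plus absorbing the excesses $c_i-s_i$ as conjugates of $x$, and $(2)\Rightarrow(3)$ via Donaldson diagonalization --- is exactly the paper's. But the central step of $(2)\Rightarrow(3)$ is asserted rather than argued, and it hides a genuine missing ingredient. You write that ``a negative--definite embedding of that chain is equivalent to the termwise domination of a blow--up sequence down to $(0,0)$.'' First, you never produce the 4--manifold whose lattice is that chain: the paper builds $X$ from a \emph{circular} chain of unknots with framings $c_1,\ldots,c_N$ (note the extra pairing $v_1\cdot v_N=1$, so it is a necklace, not a linear chain) and must prove, by a Kirby--calculus/open--book isotopy argument (Proposition~\ref{p:obd}), that $\partial X=Y_{(T,h)}$ carries the open book $(T,h)$, so that $M=W\cup(-X)$ is closed and negative definite. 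Second, and more seriously, a bare isometric embedding $Q_{-X}\hookrightarrow\D_K$ does not suffice to extract the blow--up sequence: the combinatorial analysis (Lemma~\ref{l:mij} and Proposition~\ref{p:structure}) rests on the fact that the image of $w=\sum_i v_i$ is \emph{characteristic} in $\D_K$, which is what yields $\sum_j m_{ij}=1$ for every row of the embedding matrix and pins down the possible entries. That characteristic condition is precisely where $c_1(W,J)=0$ enters, through a Spin/Spin$^c$ gluing argument (Lemma~\ref{l:spinstr} and the second half of Proposition~\ref{p:isoemb}); your phrase ``purely from the definiteness and $c_1=0$ constraints'' names the right inputs but supplies no mechanism, and without the characteristic--vector step the extraction does not go through.

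Your mechanism for $(3)\Rightarrow(1)$ is also off, though fixably so. The base case is not $(xy)^3$: with the paper's conventions $(0,0)=(-2+2,-2+2)$, so $h((0,0))=(xy)^3x^{-2}y^{-1}x^{-2}y^{-1}$, which the braid relation shows equals the \emph{identity}. Moreover a blow--up does not ``insert one extra right--handed Dehn twist'': the content of Lemma~\ref{l:blowups-id} is that $h(\hat s)$ is \emph{conjugate} to $h(s)$, so every tuple obtained from $(0,0)$ by blow--ups has monodromy $\id_T$ (for instance $h((1,1,1))=(xy)^3(x^{-1}y^{-1})^3=\id_T$, not a single positive twist). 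The positivity of $h(c)$ comes entirely from writing $h(c)=h(s)\cdot(\text{product of conjugates of }x)$ using the excesses $c_i-s_i$, which is the part of your plan that is correct; the blow--up induction itself should be replaced by the conjugacy computation.
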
 

The proof of Theorem~\ref{t:newmain} will occupy the rest of the paper. More precisely, we already know that 
$(1)\Rightarrow (2)$. In Section~\ref{s:positivity} we show that $(3)\Rightarrow (1)$, 
and in the remaining sections we show that $(2)\Rightarrow (3)$. 

\section{Construction of positive diffeomorphisms}\label{s:positivity} 

Given any $N$--tuple $s = (s_1,\ldots, s_N)\in\N^N$, there is a unique way of writing $s$ as 
\[
s = (a_1+2,\overbrace{2,\ldots,2}^{b_1-1},\ldots, a_n+2, \overbrace{2,\ldots,2}^{b_n-1})
\]
for some integers $a_1,\ldots, a_n\geq -2$, $b_1,\ldots, b_n, n\geq 1$. We define 
\[
h(s) := (xy)^3 x^{a_1} y^{-b_1}\cdots x^{a_n} y^{-b_n} \in\text{Diff}^+(T, \del T).
\]
In this section we prove that $(3)\Rightarrow (1)$ in Theorem~\ref{t:newmain}. We start by proving this fact  
in the special case of $N$--tuples which are obtained from $(0,0)$ via a sequence of successive blowups.

\begin{lem}\label{l:blowups-id}
Suppose that $s\in\N^N$ is obtained from $(0,0)$ via a sequence of successive blowups 
in the sense of Section~\ref{s:background}. Then, $h(s) = \id_T \in\text{Diff}^+(T, \del T)$.
\end{lem}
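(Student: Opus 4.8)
The plan is to induct on $N$, the length of the tuple $s$, with base case $N=2$ where $s=(0,0)$ corresponds to $a_1=-2$, $b_1=1$, $n=1$, so that $h(s)=(xy)^3 x^{-2} y^{-1}$. One must first verify directly that this element equals $\id_T$; using the braid relation $xyx=yxy$ repeatedly, $(xy)^3 = (xy)(xy)(xy)$ and one expects $(xy)^3$ to be central (indeed $(xy)^3 = \Delta^2$ is the full twist, which generates the center of $B_3$), so the computation $(xy)^3 x^{-2} y^{-1} = \id$ should reduce to checking that $(xy)^3 = y x^2 y$, or equivalently $xyxyxy = yx^2y$; this follows from two applications of $xyx=yxy$.

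For the inductive step, I would take a tuple $\hat z \in \N^{N+1}$ that is a blow-up of $z\in\N^N$ and show that $h(\hat z)$ and $h(z)$ define the same element of $\Diff^+(T,\del T)$, namely $\id_T$ by the inductive hypothesis applied to $z$. The key is to translate each of the three blow-up operations on tuples into an explicit algebraic identity relating the words $h(\hat z)$ and $h(z)$ in terms of $x$ and $y$. Concretely, I would first unwind the dictionary between a tuple $s=(a_1+2,\overbrace{2,\ldots,2}^{b_1-1},\ldots)$ and the word $(xy)^3 x^{a_1} y^{-b_1}\cdots x^{a_n} y^{-b_n}$: incrementing an entry $c_j$ corresponds to inserting a power of $x$, while inserting a new entry equal to $1$ (which means $a_i+2=1$, i.e.\ $a_i=-1$, or a new block) corresponds to inserting an $x^{-1} y^{-1}$ or modifying the $y$-powers. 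Each blow-up type should then correspond to replacing a subword by an equal subword via the braid relation. The cleanest engine for this is the identity $x^{-1} = y x^{-1} y^{-1} \cdot y x^{-1} \cdots$ coming from $yxy=xyx$, or more usefully the local move $x^{a} y^{-b} \mapsto x^{a+1} y^{-1} x^{0} y^{-1}\cdots$ that realizes a single blow-up; I expect that the three cases in the definition of blow-up are precisely engineered so that each corresponds to one canonical application of $xyx=yxy$ (possibly conjugated), inserting a ``$1$'' in the tuple at the cost of raising two neighboring entries by $1$.

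The main obstacle will be bookkeeping the boundary cases in the cyclic structure of the word. The middle blow-up $(n_1,\ldots,n_i+1,1,n_{i+1}+1,\ldots,n_k)$ is an internal move and should be a direct local substitution, but the first and third cases, $(1,n_1+1,\ldots,n_k+1)$ and $(n_1+1,\ldots,n_k+1,1)$, involve the ends of the tuple and hence the junction between the trailing $y^{-b_n}$ block and the leading $(xy)^3 x^{a_1}$, where the centrality of $(xy)^3=\Delta^2$ and the passage around the ``seam'' must be used carefully. I would handle this by treating $h(s)$ up to the cyclic symmetry that conjugation affords—since $h(s)$ being $\id$ is a statement about a specific element, not just a conjugacy class, I must track the actual equality and use that $\Delta^2$ is central to commute factors past it. Once the three substitution identities are verified, the induction closes immediately: any $s$ reaching $(0,0)$ through blow-ups satisfies $h(s)=h((0,0))=\id_T$.
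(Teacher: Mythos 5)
Your overall strategy---induct along the blow-up sequence, anchoring at $(0,0)$ and realizing each blow-up as an application of the braid relation---is exactly the paper's, but your base case contains a genuine error. The pair $(0,0)$ is a $2$-tuple, and since an entry equal to $0$ cannot be one of the ``continuation'' entries (those are forced to equal $2$ in the decomposition $s=(a_1+2,\overbrace{2,\ldots,2}^{b_1-1},\ldots)$), the only way to realize $(0,0)$ is $n=2$, $a_1=a_2=-2$, $b_1=b_2=1$. Hence $h((0,0))=(xy)^3x^{-2}y^{-1}x^{-2}y^{-1}$, not $(xy)^3x^{-2}y^{-1}$ as you claim (your parameters $n=1$, $b_1=1$ would produce the $1$-tuple $(0)$). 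The element you propose to show trivial is in fact not trivial: the exponent-sum homomorphism $\exp\co\Diff^+(T,\del T)\to\Z$ gives $\exp\bigl((xy)^3x^{-2}y^{-1}\bigr)=3\neq 0$, and for the same reason your claimed identity $(xy)^3=yx^2y$ is false ($6\neq 4$). The correct base computation, $(xy)^3x^{-2}y^{-1}x^{-2}y^{-1}=(xy)^3(xy)^{-3}=\id_T$, does go through by two applications of $x^{-1}y^{-1}x^{-1}=y^{-1}x^{-1}y^{-1}$.

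On the inductive step: your instinct that you ``must track the actual equality'' because being the identity is not a conjugacy-invariant-free statement is backwards---the identity is alone in its conjugacy class, so it suffices to show $h(\hat z)$ is \emph{conjugate} to $h(z)$, and this is precisely what the paper does. This matters because in the two boundary blow-up cases (those touching the seam between the trailing $y^{-b_n}$ and the leading $(xy)^3x^{a_1}$) the paper's verification genuinely uses a conjugation to cycle a suffix of the word to the front before applying the braid relation; insisting on a literal word-for-word equality there would make the bookkeeping needlessly painful, if not impossible without first knowing the element is central. With the base case corrected and the inductive claim weakened to conjugacy, your outline matches the paper's proof.
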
 

\begin{proof} 
Note that 
\[
(0,0) = (-2+2, \overbrace{2,\ldots,2}^{0=1-1}, -2+2, \overbrace{2,\ldots,2}^{0=1-1}), 
\]
hence 
\begin{multline*}
h((0,0)) = (xy)^3 x^{-2} y^{-1} x^{-2} y^{-1} = (xy)^3 x^{-1} (x^{-1} y^{-1} x^{-1}) x^{-1} y^{-1} 
\\
= (xy)^3 (x^{-1} y^{-1} x^{-1}) (y^{-1} x^{-1} y^{-1})  
= (xy)^3 y^{-1} x^{-1} y^{-1} x^{-1} y^{-1} x^{-1} = (xy)^3 (xy)^{-3} = \id_T
\end{multline*}
Since each element of $S$ by blowing--up $(0,0)$, to prove the lemma it suffices to check that, if 
$\hat s$ denotes a blow--up of $s\in S$, $h(s)$ and $h(\hat s)$ are 
conjugate in $\Diff^+(T,\del T)$ 
for every $s\in S$. We may write 
\[
s =(a_1+2,\overbrace{2,\ldots,2}^{b_1-1},\ldots, a_n+2, \overbrace{2,\ldots,2}^{b_n-1})
\]
for some  $a_1,\ldots, a_n\geq -2$, $b_1,\ldots, b_n\geq 1$ and $n\in\N$. Then, 
$h(s) = (xy)^3 x^{a_1} y^{-b_1}\cdots x^{a_n} y^{-b_n}$ 
and depending on how the blowup is performed, there are several possibilities for $\hat s$. 
These lead to the following possible cases for $h(\hat s)$: 
\[
h(\hat s) = 
\begin{cases} 
(xy)^3 x^{-1} y^{-1} x^{a_1+1} y^{-b_1}\cdots x^{a_n} y^{-b_n+1} xy^{-1} ,\\
(xy)^3 x^{a_1} y^{-b_1}\cdots x^{a_i+1} y^{-1} x^{-1} y^{-1} x y^{-b_i+1} x^{a_{i+1}} \cdots x^{a_n} y^{-b_n}, i\neq 1, n,\\
(xy)^3 x^{a_1+1} y^{-b_1}\cdots x^{a_n} y^{-b_n+1} x y^{-1} x^{-1} y^{-1}.
\end{cases} 
\]
It is straightfoward to check that in each case $h(\hat s)$ is conjugate to $h(s)$. In the first case, for instance, we have 
\begin{multline*}
h(\hat s) = \de x^{-1} y^{-1} x^{a_1+1} y^{-b_1}\cdots x^{a_n} y^{-b_n+1} xy^{-1} 
\sim \de yx(y^{-1}x^{-1} y^{-1}) x^{a_1+1} y^{-b_1}\cdots x^{a_n} y^{-b_n} 
\\ 
= \de y x x^{-1}y^{-1} x^{-1} x^{a_1+1} y^{-b_1}\cdots x^{a_n} y^{-b_n} = h(s)
\end{multline*}
We omit the easy verifications in the remaining cases. 
\end{proof} 
 
In order to establish the implication $(3)\Rightarrow (1)$ of Theorem~\ref{t:newmain} for general $N$--tuples, we first analyze 
what happens when a single entry of the $N$--tuple is increased by $1$. 

\begin{lem}\label{l:xinsert}
Let $s = (s_1,\ldots, s_N)\in\N^N$ and $s' = (s_1,\ldots, s_{i-1}, s_i+1,s_{i+1}, \ldots, s_N)$ for some $i\in\{1,\ldots, N\}$.   
Then, there are $\phi, \psi\in\Diff^+(T,\del T)$ such that $h(s) = \phi \psi$ and $h(s') = \phi x \psi$. 
\end{lem}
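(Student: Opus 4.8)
The plan is to sidestep the block decomposition entirely by first recording a clean ``syllable'' formula for $h(s)$ directly in terms of the entries of the tuple, after which $\phi$ and $\psi$ can be read off by inspection.

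First I would prove the identity
\[
h(s) = (xy)^3\, x^{s_1-2} y^{-1}\, x^{s_2-2} y^{-1} \cdots x^{s_N-2} y^{-1} \in \Diff^+(T,\del T),
\]
valid for every $s=(s_1,\ldots,s_N)\in\N^N$. This follows straight from the definition of $h(s)$: writing $s$ in its canonical form $(a_1+2,\overbrace{2,\ldots,2}^{b_1-1},\ldots, a_n+2,\overbrace{2,\ldots,2}^{b_n-1})$ and expanding each factor $y^{-b_j}$ as $y^{-1}(y^{-1})^{b_j-1}$, the block $x^{a_j} y^{-b_j}$ becomes the concatenation of the syllable $x^{a_j} y^{-1}$ with $b_j-1$ copies of the syllable $x^0 y^{-1}$. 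Since the block--starting entry is $a_j+2$ while each continuation entry is $2$, the $k$--th syllable is exactly $x^{s_k-2} y^{-1}$, which gives the displayed product. As a bonus this exhibits $h(s)$ as manifestly depending only on the entries $s_k$, so the harmless ambiguity in the block decomposition when some $a_j=0$ causes no trouble.

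With this formula in hand the lemma is immediate. Since $s'$ differs from $s$ only in that its $i$--th entry is $s_i+1$, I would set
\[
\phi := (xy)^3\, x^{s_1-2} y^{-1} \cdots x^{s_{i-1}-2} y^{-1}\, x^{s_i-2}, \qquad
\psi := y^{-1}\, x^{s_{i+1}-2} y^{-1} \cdots x^{s_N-2} y^{-1},
\]
both of which lie in $\Diff^+(T,\del T)$. Then $\phi\psi = h(s)$ by construction, while $\phi x \psi$ replaces the factor $x^{s_i-2}$ by $x^{s_i-2}\cdot x = x^{(s_i+1)-2}$, which is precisely the $i$--th syllable of $h(s')$; hence $\phi x \psi = h(s')$. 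The boundary indices $i=1$ and $i=N$ need no separate treatment, as the empty initial or terminal segments cause no issue, and no positivity of $\phi,\psi$ is required, so the negative exponents occurring when some $s_k=1$ are irrelevant.

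The only point demanding care is the syllable identity, and even there the work is purely formal. I would stress that this reformulation is exactly what makes the argument painless: attacking the lemma directly from the block decomposition would force a case analysis according to whether $s_i$ is a block--starting entry or one of the continuation $2$'s, together with an analysis of how the canonical decomposition of $s'$ is reorganized --- for example, when $s_i=1$ is a block start with $i>1$, raising it to $2$ merges two adjacent blocks, and one must then check by hand that the inserted $x$ still lands in the correct place. The formula dissolves all of these cases into the single observation that each entry contributes one independent syllable $x^{s_k-2}y^{-1}$.
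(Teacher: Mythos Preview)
Your proof is correct. The paper takes precisely the route you anticipate in your final paragraph: it splits into two cases according to whether $s_i$ is a block--starting entry $a_j+2$ (in which case $h(s')$ is obtained from $h(s)$ by replacing $x^{a_j}$ with $x^{a_j+1}$) or a continuation entry equal to $2$ (in which case some $y^{-b_j}$ is replaced by $y^{-a} x y^{-b}$ with $a+b=b_j$). Your syllable identity $h(s) = (xy)^3\prod_{k=1}^N x^{s_k-2} y^{-1}$ is exactly the common refinement that makes both cases instances of a single observation, so the two arguments are close in spirit; what your version buys is the elimination of the case split and of any need to track how the canonical block decomposition of $s'$ reorganizes when an entry is incremented. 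As a side benefit, your formula makes it transparent that $h(s)$ is independent of the block decomposition, which is useful since --- contrary to the paper's assertion --- the decomposition with $a_j\geq -2$ is in fact not unique when some interior entry equals $2$.
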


\begin{proof} 
Write $(s_1,\ldots, s_N) = (a_1+2,\overbrace{2,\ldots,2}^{b_1-1},\ldots, a_n+2, \overbrace{2,\ldots,2}^{b_n-1})$ 
for some integers $a_1,\ldots, a_n\geq -2$, $b_1,\ldots, b_n, n\geq 1$, so that  
$h(s) = (xy)^3 x^{a_1} y^{-b_1}\cdots x^{a_n} y^{-b_n}$. If $s_i=a_j+2$ for some $j$, then 
$h(s')$ is obtained from $h(s)$ by replacing $x^{a_j}$ with $x^{a_j+1}$, and the statement holds. 
If $s_i=2$, then it is easy to check that $h(s')$ is obtained from $h(s)$ by replacing 
$y^{-b_j}$, for some $j$, with $y^{-a} x y^{-b}$ where $a+b=b_j$. Again, the statement holds.
\end{proof} 

We are now ready to reach the goal of the section. 

\begin{prop}\label{p:3implies1}
Let $(c_1,\ldots, c_N)$ be an $N$--tuple of integers and suppose that there is a sequence of blowups 
$(s_1,\ldots,s_N)\rightarrow\stackrel{\text{blowup}}{\cdots}\rightarrow (0,0)$ such that 
$c_1\geq s_1,\ c_2\geq s_2,\ \ldots, c_N\geq s_N$.  Then, $h(c_1,\ldots, c_N)\in\Dehn^+(T,\del T)$. 
\end{prop}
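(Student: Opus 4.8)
The plan is to deduce Proposition~\ref{p:3implies1} from the two preceding lemmas by a two-step argument: first reduce from the given tuple $(c_1,\ldots,c_N)$ to the fully blown-down tuple $(0,0)$, tracking how the diffeomorphism changes, and then invoke the conjugacy/identity facts already established. Concretely, I would start from the sequence of blowups $(s_1,\ldots,s_N)\to\cdots\to(0,0)$ furnished by the hypothesis. By Lemma~\ref{l:blowups-id}, the tuple $s=(s_1,\ldots,s_N)$ is obtained from $(0,0)$ by successive blowups, so $h(s)=\id_T$. Thus the target tuple $s$ already yields the trivial diffeomorphism, and the only remaining task is to account for the differences $c_i\geq s_i$.

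\medskip

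The second step is to climb from $h(s)$ up to $h(c_1,\ldots,c_N)$ one unit at a time, using Lemma~\ref{l:xinsert} repeatedly. Since $c_i\geq s_i$ for every $i$, I can pass from $s$ to $(c_1,\ldots,c_N)$ through a finite chain of tuples, each obtained from the previous one by increasing a single entry by $1$. At each such step Lemma~\ref{l:xinsert} produces $\phi,\psi\in\Diff^+(T,\del T)$ with the old diffeomorphism equal to $\phi\psi$ and the new one equal to $\phi x\psi$. The key observation is that $\phi x\psi$ is obtained from $\phi\psi$ by inserting the single right-handed Dehn twist $x$; equivalently, $\phi x\psi = \phi\psi\cdot(\psi^{-1}x\psi)$, so the new diffeomorphism differs from the old one by right-multiplication by the conjugate $\psi^{-1}x\psi$ of $x$. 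Because any conjugate of $x$ is itself a right-handed Dehn twist along some simple closed curve, each such insertion multiplies the current element by a single positive Dehn twist. Starting from $h(s)=\id_T$, which lies in $\Dehn^+(T,\del T)$ (the monoid contains the identity by definition), and multiplying by one positive Dehn twist at each of the finitely many steps, we conclude that $h(c_1,\ldots,c_N)$ is a product of right-handed Dehn twists, hence lies in $\Dehn^+(T,\del T)$.

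\medskip

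The main point requiring care is the bookkeeping that the chain of single-entry increments from $s$ to $(c_1,\ldots,c_N)$ can be realized entirely within the regime where Lemma~\ref{l:xinsert} applies, and that the resulting factor at each step is genuinely a \emph{positive} Dehn twist rather than merely a conjugate of $x$ in the abstract group. For the former, I would simply order the increments coordinate by coordinate, raising each $s_i$ to $c_i$ in $c_i-s_i$ unit steps; Lemma~\ref{l:xinsert} is stated for an arbitrary single-entry increase, so no additional hypotheses are needed. For the latter, I would invoke the fact---already used in the proof of Proposition~\ref{p:steinfill}---that a conjugate $\psi^{-1}x\psi$ of the right-handed Dehn twist $x$ is again a right-handed Dehn twist, namely the twist along the image curve $\psi^{-1}(\ga_x)$. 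This is the only geometric input beyond the two lemmas, and it is precisely what upgrades ``product of conjugates of $x$'' to ``element of $\Dehn^+(T,\del T)$.'' I do not anticipate a serious obstacle here; the proposition is essentially a clean assembly of Lemmas~\ref{l:blowups-id} and~\ref{l:xinsert}, with the conjugation-invariance of positivity as the connecting glue.
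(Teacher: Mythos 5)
Your proposal is correct and follows essentially the same route as the paper: invoke Lemma~\ref{l:blowups-id} to get $h(s_1,\ldots,s_N)=\id_T$, then climb to $(c_1,\ldots,c_N)$ one unit at a time, using Lemma~\ref{l:xinsert} to write each step as right--multiplication by the conjugate $\psi^{-1}x\psi$ of $x$, which is a right--handed Dehn twist, so positivity is preserved since $\Dehn^+(T,\del T)$ is a monoid. No substantive differences from the paper's argument.
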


\begin{proof} 
By Lemma~\ref{l:blowups-id} we have $h(s_1,\ldots,s_N) = \id_T$. In view of the inequalities 
$c_1\geq s_1,\ c_2\geq s_2,\ \ldots, c_N\geq s_N$, in order to prove the statement it clearly 
suffices to show that, if $s = (s_1,\ldots, s_N)\in\N^N$ and 
$s' = (s_1,\ldots, s_{i-1}, s_i+1,s_{i+1}, \ldots, s_N)$ for some $i\in\{1,\ldots, N\}$, 
\begin{equation}\label{e:+1pos}
h(s)\in\Dehn^+(T, \del T)\ \Longrightarrow\  h(s')\in\Dehn^+(T,\del T). 
\end{equation}
By Lemma~\ref{l:xinsert} there are $\phi, \psi\in\Diff^+(T,\del T)$ such that 
\[
h(s') = \phi x \psi = \phi \psi \psi^{-1} x \psi = h(s) (\psi^{-1} x \psi).
\]
By assumption $h(s)\in\Dehn^+(T, \del T)$, each conjugate of $x$ is in $\Dehn^+(T, \del T)$ and 
$\Dehn^+(T, \del T)$ is a monoid, so we conclude that~\eqref{e:+1pos} holds. 
\end{proof} 

\section{A topological construction}\label{s:construction}

The purpose of this section is to establish Proposition~\ref{p:isoemb}, which will be used in Section~\ref{s:final} 
to prove $(3)\Rightarrow (1)$ in Theorem~\ref{t:newmain}. We derive the proposition by applying Donaldson's 
celebrated theorem~\cite{Do87}*{Theorem~1} to certain suitably constructed smooth, closed 4--manifolds. 

Let $h$ be an element of $\Diff^+(T,\del T)$ factorized as in the statement of Theorem~\ref{t:newmain}:
\[
h=(xy)^3 x^{a_1} y^{-b_1}\cdots x^{a_n} y^{-b_n}\quad a_i, b_i, n\geq 1.
\] 
Define the string $(c_1,\ldots, c_N)$, where $N=\sum_{i=1}^n b_i$, by setting 
\begin{equation}\label{e:cstring}
(c_1,\ldots, c_N):=(a_1+2,\overbrace{2,\ldots,2}^{b_1-1},a_2+2,\ldots,a_n+2,\overbrace{2,\ldots,2}^{b_n-1})
\end{equation}
Note that if $N=\sum_{i=1}^n b_i = 1$ then $n= b_1=1$. In that case $h$ is clearly positive, 
therefore from now on we shall assume $N\geq 2$. 

Consider the 3--manifold $Y$ defined by performing integral Dehn surgeries on $S^3$ according to 
the framed link $L$ of Figure~\ref{f:1}. 
\begin{figure}[ht]
\labellist
\small\hair 2pt
\pinlabel $c_1$ at 150 92
\pinlabel $c_2$ at 335 92
\pinlabel $c_{N-2}$ at 900 92
\pinlabel $c_{N-1}$ at 1090 92
\pinlabel $c_N$ at 610 22
\endlabellist
\centering
\includegraphics[scale=0.3]{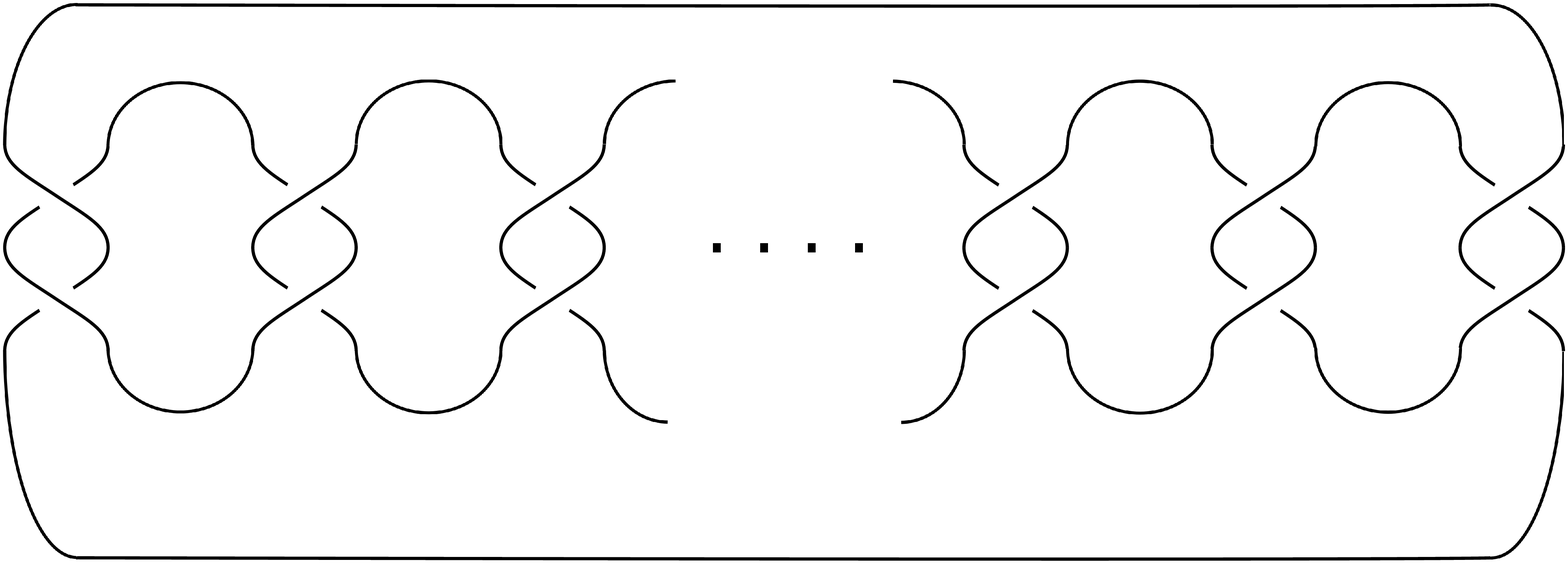}
\caption{Surgery presentation for $Y$ and handle decomposition of $X$.} 
\label{f:1}
\end{figure}
We are going to argue that $Y$ carries an open book decomposition with page 
a one--holed torus $S$ and monodromy $h$ when $S$ is suitably identified with $T$. 
In other words, $Y=Y_{(T,h)}$. Consider the picture on the 
left--hand side of Figure~\ref{f:2} for any $i\in\{2,\ldots,n\}$. 
The picture illustrates a one--holed torus $S$ embedded in the complement 
of the framed link $L$. 

\begin{prop}\label{p:obd}
$S$ is the page of an open book decomposition on $Y$ which, 
under a suitable identification of $S$ with $T$, has monodromy $h$. 
\end{prop}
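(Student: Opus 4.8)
The plan is to read the open book $(S,h)$ directly off the surgery diagram of Figure~\ref{f:1}, using the standard dictionary between integral surgery on page curves and composition of the monodromy with Dehn twists. Recall that if $(\Si,\phi)$ is an open book for a closed $3$--manifold $M$ and $\ga$ is a simple closed curve lying on a single page, then surgering $\ga$ with framing $\mp 1$ relative to the framing induced by the page produces the open book $(\Si,\phi\,\de_\ga^{\pm1})$, where $\de_\ga$ is a right--handed Dehn twist. My first step is therefore to verify that the surface $S$ drawn in Figure~\ref{f:2} really is a page, i.e. that $Y\setminus S$ fibres over $S^1$ with the surgery solid tori completing the fibration; this is a direct inspection of the complement of $L$ once $S$ has been isotoped into a standard position, after which $S$ may be identified with $T$.

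Next I would fix the two standard generating curves $\al,\be\subset S$ meeting transversely in a single point, with $x=\de_\al$ and $y=\de_\be$, and locate each component of $L$ relative to $S$. Since consecutive twist curves in a word in $x$ and $y$ intersect once, their push--offs into successive pages form a Hopf--linked chain, which is exactly the shape of $L$; thus the linear chain structure of $L$ is forced by the alternation of $\al$-- and $\be$--twists. The core of the argument is the framing computation: I would show that, syllable by syllable, the block $x^{a_i}y^{-b_i}$ of $h$ contributes a string of chain components whose framings, measured in $S^3$ and taking the page framing into account, are precisely $a_i+2$ followed by $b_i-1$ entries equal to $2$, in agreement with~\eqref{e:cstring}. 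Reading the surgeries back through the dictionary, in the cyclic order prescribed along the binding, then recovers the monodromy as $x^{a_1}y^{-b_1}\cdots x^{a_n}y^{-b_n}$, while the component carrying framing $c_N$ and the way the chain closes up account for the central factor $(xy)^3$.

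The main obstacle is the bookkeeping in this framing computation. The coefficients $c_i$ are not the naive $\mp1$ coefficients of the dictionary: the page framing of each curve is nonzero in $S^3$ and is itself shifted by the linking with the neighbouring chain components, so the conversion from page framings to Seifert framings has to be carried out consistently along the whole necklace (a Rolfsen-- or continued--fraction--type computation), and it is the interaction of these shifts that turns the $\mp1$'s into the advertised values $a_i+2$ and $2$. A useful sanity check here is that the reference open book $(T,\id_T)$ has total space $\#^2(S^1\x S^2)$ rather than $S^3$, which is precisely why enough positive twisting --- encoded by $(xy)^3$ --- must be present for $Y$ to admit a surgery presentation on $S^3$ at all. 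Keeping track of this, together with the composition--order convention (``$\phi$ followed by $\psi$'' for $\phi\psi$) and the right--/left--handedness of each twist, is where sign errors are most likely; once these conventions are pinned down, the identification $Y=Y_{(T,h)}$ with the stated monodromy follows by direct inspection of Figure~\ref{f:2}.
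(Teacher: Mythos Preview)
Your approach is genuinely different from the paper's, and as written it has a gap.

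The paper does not use the ``surgery on a page curve equals Dehn twist'' dictionary at all. Following the appendix of \cite{KM94}, it computes the monodromy by tracking a single page $S$ as it is isotoped once around the binding inside $Y$. This isotopy is broken into $2N$ elementary pieces: for each $i$ there is (a) an isotopy through the $i$--th surgery solid torus, which on $H_1(T)$ acts by $\left(\begin{smallmatrix}1 & c_i\\0 & 1\end{smallmatrix}\right)$, hence by $x^{c_i}$; and (b) an isotopy across the clasp region between $L_i$ and $L_{i+1}$ (using that the Hopf link complement is $T^2\times I$), which acts by the fixed element $\phi = x^{-1}y^{-1}x^{-1}$ corresponding to $\left(\begin{smallmatrix}0 & -1\\1 & 0\end{smallmatrix}\right)$, with the single exception that the clasp between $L_N$ and $L_1$ contributes $\phi^{-1}$. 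Multiplying these gives $x^{c_1}\phi\, x^{c_2}\phi\cdots x^{c_N}\phi^{-1}$, which the paper then simplifies algebraically to $(xy)^3 x^{a_1}y^{-b_1}\cdots x^{a_n}y^{-b_n}$.

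Your proposal instead wants to realise each $L_i$ as a curve on a page of some fixed open book and read off the monodromy as a product of Dehn twists along these curves, with powers governed by the surgery coefficients. Two things are missing for this to be a proof. First, you never specify the base open book against which the surgeries are measured; the remark about $(T,\id_T)$ living on $\#^2(S^1\times S^2)$ is a sanity check but not a construction, and without a concrete starting point there is nothing to compute framings relative to. Second, and more seriously, the surgery dictionary only contributes powers of a \emph{single} Dehn twist for each $L_i$, so if all the $L_i$ sit on pages as parallel copies of the same curve (say $\al$), you only ever produce powers of $x$. The factors of $y$ in $h$, and the global factor $(xy)^3$, must come from somewhere --- in the paper's argument they come from the clasp isotopies $\phi$ and from the asymmetry at the closing clasp. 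Your proposal asserts that ``the way the chain closes up accounts for the central factor $(xy)^3$'' but gives no mechanism; in the surgery--dictionary framework this would have to be baked into the base monodromy, which you have not identified. Until you pin down that base open book and actually carry out the framing conversion you describe, the proposal is an outline rather than an argument.
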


\begin{proof}
The following proof is an adaptation to the present situation 
of the arguments given in~\cite{KM94}*{Appendix}.
The surface $S$ can be isotoped to the one--holed torus $S'$ illustrated in the picture on the right--hand side 
of Figure~\ref{f:2}. To see that, just think about the fact that the complement of the Hopf link in $S^3$ is a torus times 
an interval. Moreover, the isotopy takes the oriented curves $\mu,\la\subset S$ in the left--hand picture to 
$\la',-\mu'\subset S'$, respectively, illustrated in the right--hand picture. 
We could also enlarge slightly $S$ to $\tilde S$ and $S'$ to $\tilde S'$ so that $\del\tilde S=\del\tilde S'$.  We may identify 
$S$ and $S'$ with $T$ so that the isotopy from $S$ to $S'$ induces an orientation--preserving diffeomorphism 
$\phi\co T\to T$ prescribed, in terms of an oriented basis of $H_1(T;\Z)$, by the matrix 
$M=\left(\begin{smallmatrix}0 & -1\\1 & 0\end{smallmatrix}\right)\in SL(2;\Z)$. Since the generators 
$x,y\in\Diff^+(T,\del T)$ associated to the given oriented basis correspond, respectively, to 
$\left(\begin{smallmatrix}1 & 1\\0 & 1\end{smallmatrix}\right)$ and 
$\left(\begin{smallmatrix}1 & 0\\-1 & 1\end{smallmatrix}\right)$, 
one can easily check that $\phi=x^{-1}y^{-1}x^{-1}$. 
\begin{figure}[ht]
\labellist
\small\hair 2pt
\pinlabel $c_{i-1}$ at 25 22
\pinlabel $\la$ at 55 160 
\pinlabel $S$ at 65 100
\pinlabel $\mu$ at 225 160
\pinlabel $c_i$ at 220 22
\pinlabel $\mu'$ at 435 160
\pinlabel $c_{i-1}$ at 460 22
\pinlabel $\la'$ at 600 160
\pinlabel $S'$ at 595 100
\pinlabel $c_i$ at 650 22
\endlabellist
\centering
\includegraphics[scale=0.45]{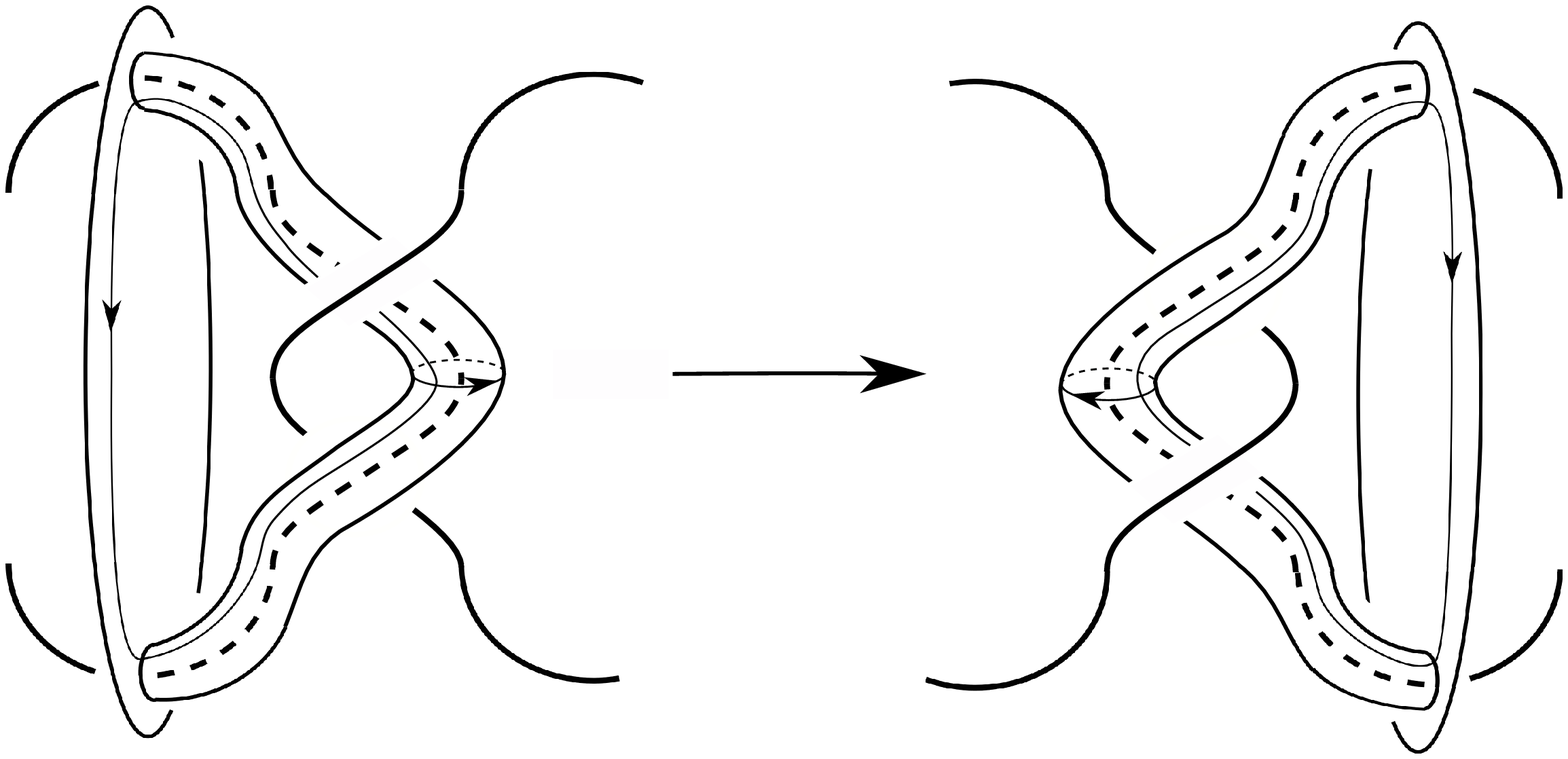}
\caption{The isotopy from $S$ to $S'$.} 
\label{f:2}
\end{figure}
The same analysis applies to every clasp of $L$ except the one between the $1$--st and the $N$--th components of $L$. 
In that case, an analysis as above shows that matrix associated to the last isotopy is $-M=M^{-1}$ 
instead of $M$, corrisponding to the diffeomorphism $\phi^{-1}=xyx$. 

Now we claim that, for each $i=1,\ldots, N$, there is another isotopy sending the surface $S' := D'\cup A'$, 
illustrated on the left--hand side of Figure~\ref{f:3}, to the surface $S=D\cup A$ 
illustrated on the right--hand side. This isotopy goes through the solid torus glued along a neighborhood 
of the $i$--th component of $L$. In fact, it fixes $D$ and sends $A'$ to $A$, sending the 
simple closed curve $\la'\subset S'$ to $\la''\subset S$, which twists $c_i$ times around $A$. 
To see this, recall that the presence of the framing coefficient ``$c_i$'' means that a neighborhood 
of the $i$--th component $L_i$ of $L$ with a meridian--longitude basis $m, \ell$ of its boundary is first 
cut out and then re--glued by sending $m$ to $c_i m+\ell$ and $\ell$ to $-m$. Thus, the simple closed curve 
$(\la'\cap A')\cup(\la''\cap A)$ bounds a meridional disk in the glued--up solid torus, while $A'$ and $A$ can be identified 
with neighborhoods of parallel longitudinal curves on its boundary. This means that $A'$ is isotopic to $A''$ via 
an isotopy which carries the annulus across the glued--up solid torus, sending $\la'\cap A'$ to $\la''\cap A$, 
and the claim is proved. As in the previous case of Figure~\ref{f:2}, we may identify $S'$ and $S$ with 
$T$ so that the isotopy induces an automorphism $T\to T$ represented by the matrix 
$N(c_i)=\left(\begin{smallmatrix} 1 & c_i\\0 & 1\end{smallmatrix}\right)\in SL(2;\Z)$, hence given by $x^{c_i}$. 
\begin{figure}[ht]
\labellist
\small\hair 2pt
\pinlabel $\mu'$ at 35 160
\pinlabel $c_{i-1}$ at 75 30
\pinlabel $A'$ at 138 280
\pinlabel $\la'$ at 203 220
\pinlabel $D$ at 195 100
\pinlabel $c_i$ at 255 30
\pinlabel $c_{i+1}$ at 342 30
\pinlabel $c_{i-1}$ at 605 30
\pinlabel $c_i$ at 685 30
\pinlabel $\la''$ at 735 230
\pinlabel $D$ at 740 100
\pinlabel $A''$ at 810 280
\pinlabel $c_{i+1}$ at 870 30
\pinlabel $\mu''$ at 900 155
\endlabellist
\centering
\includegraphics[scale=0.45]{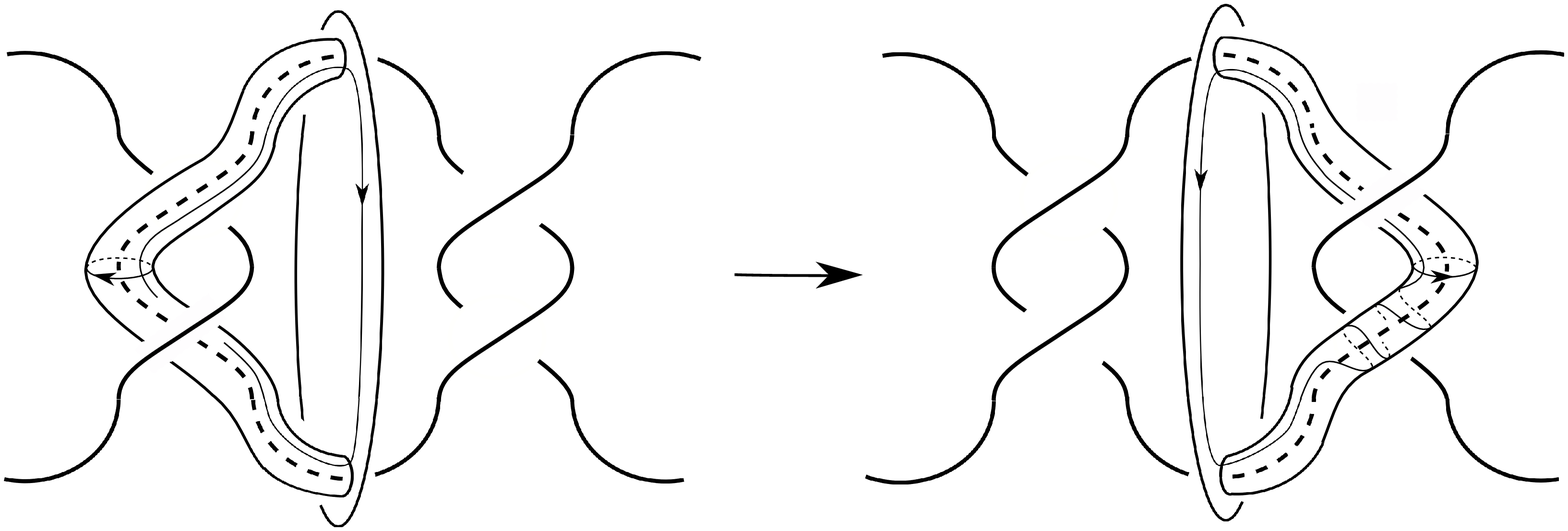}
\caption{The isotopy from $S'$ to $S''$.} 
\label{f:3}
\end{figure}
By composing all the isotopies described so far, we see that $Y$ admits an open book decomposition 
with one--holed torus page, identified with $T$. The corresponding monodromy is obtained by composing 
the diffeomorphisms induced by the various isotopies. We do the calculation in terms of $x$ and $y$, 
starting from the first component $L_1$. Using the above analysis, denoting 
conjugation equivalence by $\sim$ and following our conventions on the composition of diffeomorphisms 
(see paragraph before Proposition~\ref{p:steinfill}) we obtain 
\begin{multline*}
x^{c_1}\phi x^{c_2}\cdots\phi x^{c_N}\phi^{-1} = 
x^{a_1+2}\phi(x^2\phi)^{b_1-1} x^{a_2+2}\cdots x^{a_n+2}\phi(x^2 \phi)^{b_n-1}\phi^{-2}=\\
x^{a_1+1} y^{-b_1}\cdots x^{a_n} y^{-b_n+1} x^2 y x \sim 
x y x^2 y x x^{a_1} y^{-b_1}\cdots x^{a_n} y^{-b_n} = (xy)^3 x^{a_1} y^{-b_1}\cdots x^{a_n} y^{-b_n} = h.
\end{multline*}
\end{proof} 

By Proposition~\ref{p:obd} we can view Figure~\ref{f:1} as a presentation of $Y_{(T,h)}$, 
including the induced open book decomposition. On the other hand, we can also view the framed link 
$L$ as prescribing the attachment of $n$ four--dimensional 2--handles to the 4--ball $B^4$, resulting 
in a smooth oriented 4--manifold $X$ with boundary orientation--preserving diffeomorphic to $Y_{(T,h)}$.
Moreover, note that $L$ is a characteristic sublink of itself, i.e.~$\lk(L, L_i) = \lk(L_i, L_i)\bmod 2$ for 
each component $L_i\subset L$. Recall~\cite{GS99}*{\S 5.7} that there is a natural one--to--one 
correspondence between Spin structures on $Y_{(T,h)}$ and characteristic sublinks of $L$, given by assigning 
to a Spin structure $\Theta$ the sublink $C$ of $L$ consisting of all components $L_i$ such that $\Theta$ does not 
extend across the 2--handle in $X$ attached to $L_i$. Moreover, by~\cite{EO08}*{Lemma~6.1} the Euler class of $\xi_{(T,h)}$ 
vanishes, therefore $\xi_{(T,h)}$ is trivial as a 2--plane bundle over $Y_{(T,h)}$. 
Homotopy classes of trivializations of $\xi_{(T,h)}$ are in 1--1 correspondence with homotopy classes of 
maps $Y_{(T,h)}\to S^1$. If $Y_{(T,h)}$ is a rational homology 3--sphere we have $H^1(Y_{(T,h)};\Z)=0$, 
therefore $\xi_{(T,h)}$ admits a unique trivialization up to homotopy. Moreover, each trivalization of $\xi_{(T,h)}$ canonically 
determines a trivialization of $TY_{(T,h)}$, hence a Spin structure on $Y_{(T,h)}$. We denote by 
$\Theta_\xi$ the Spin structure on $Y_{(T,h)}$ associated in this way to $\xi$. The following lemma 
is an adaptation to the present situation of~\cite{KM94}*{Lemma~A.6}. 

\begin{lem}\label{l:spinstr}
The Spin structure $\Theta_\xi$ corresponds to $L$ viewed as a characteristic sublink of itself. 
\end{lem}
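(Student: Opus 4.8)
The plan is to follow the strategy of~\cite{KM94}*{Lemma~A.6}, adapting it to the open book at hand. The key point is that the Spin structure $\Theta_\xi$ is characterized intrinsically: it is the one induced by the canonical trivialization of $TY_{(T,h)}$ coming from the (unique, since $H^1(Y_{(T,h)};\Z)=0$) homotopy class of trivializations of the contact plane field $\xi_{(T,h)}$. To identify which characteristic sublink this corresponds to, I would work with the explicit handle decomposition of $X$ determined by $L$ and the open book structure established in Proposition~\ref{p:obd}.

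First I would recall the correspondence stated just above the lemma: a Spin structure $\Theta$ on $Y_{(T,h)}=\del X$ corresponds to the sublink $C\subseteq L$ consisting of exactly those components $L_i$ across whose $2$--handle $\Theta$ fails to extend. Since $L$ is characteristic in itself, it is a legitimate such sublink, so the content of the lemma is that $\Theta_\xi$ extends across \emph{none} of the $2$--handles, i.e.~that $\Theta_\xi$ is the restriction to the boundary of the Spin structure determined by the characteristic class $c=\PD[L]$ rather than by $c=0$. Equivalently, I must show that the trivialization of $TY_{(T,h)}$ induced by $\xi_{(T,h)}$ does not bound a Spin structure on the $2$--handlebody $X$, but does extend over the $4$--manifold obtained by using $L$ as the characteristic sublink.

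The heart of the argument is to compare two trivializations of $TY_{(T,h)}$ along each page of the open book. On one hand, the contact structure gives a trivialization adapted to the open book: over a page $\Si_\th$ one uses the page framing together with the Reeb-like normal direction. On the other hand, each $2$--handle attached along $L_i$ with framing $c_i$ induces a trivialization near the corresponding binding/annulus region, and the obstruction to extending the Spin structure across that handle is measured by comparing the contact framing of the attaching curve with the surface framing coming from the page. As in~\cite{KM94}, I would compute this obstruction $2$--handle by $2$--handle using the explicit isotopies of Figures~\ref{f:2} and~\ref{f:3}: each $L_i$ is isotopic to a curve lying on a page (built from the annuli $A$, $A'$, $A''$ carrying the Dehn-twist data), and the discrepancy between its contact framing and its surface framing is precisely the mod~$2$ reduction of the relevant data, which forces the contact trivialization to be nontrivial across every handle.

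The main obstacle I anticipate is bookkeeping of the two competing framings in a way that is manifestly consistent with the orientation and composition conventions fixed in Section~\ref{s:background} (recall that $\phi\psi$ means $\phi$ \emph{followed by} $\psi$, and $\phi=x^{-1}y^{-1}x^{-1}$). Getting the signs right so that the obstruction is nonzero for \emph{each} component simultaneously — rather than, say, for a proper sublink — is the delicate step, and it is exactly where the choice of the \emph{contact} framing (as opposed to an arbitrary one) enters. I would mirror the framing computation of~\cite{KM94}*{Lemma~A.6} verbatim wherever the local pictures agree, and treat the exceptional clasp between the $1$--st and $N$--th components of $L$ (where the isotopy matrix is $-M=M^{-1}$ rather than $M$) as a separate case, checking that the sign reversal there is consistent with the same conclusion. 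Once each local obstruction is shown to be the nontrivial class in $H^1$ of the attaching region, the characteristic sublink $C$ must contain every $L_i$, giving $C=L$ and completing the proof.
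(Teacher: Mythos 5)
Your setup is right: you correctly reduce the lemma to showing that $\Theta_\xi$ fails to extend across \emph{every} $2$--handle of $X$, and you correctly identify \cite{KM94}*{Lemma~A.6} and the compatibility of $\xi_{(T,h)}$ with the open book as the relevant inputs. But the proposal stops short of the one computation that actually proves this, and the route you sketch in its place points in a misleading direction. Whether $\Theta_\xi$ extends across the handle attached to $L_i$ is detected on the \emph{belt sphere}, i.e.\ on a meridian $\mu$ of $L_i$: the Spin structure extends iff the induced framing of $TX|_\mu$ extends over the cocore disc. The decisive observation (and the entire content of the paper's proof) is that each such meridian can be taken to lie on a page $S$ (as in Figure~\ref{f:2}), and that the trivialization of $TY_{(T,h)}$ induced by $\xi_{(T,h)}$ restricts to $\mu$ as the tangential framing relative to the page --- tangent to $\mu$, normal to $\mu$ in $S$, normal to $S$. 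The tangential framing of a circle embedded in a surface is the non--bounding framing, so its stabilization to $TX|_\mu$ does not extend across the cocore; hence $L_i$ lies in the characteristic sublink, for every $i$, with no case analysis. Your proposal never names this obstruction; it is replaced by the phrase ``the discrepancy \dots is precisely the mod~$2$ reduction of the relevant data, which forces the contact trivialization to be nontrivial across every handle,'' which is not a computation and does not specify what data is being reduced or why the result is nonzero for each component.

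Moreover, the route you do sketch --- comparing the \emph{contact framing of the attaching curve} $L_i$ with its surface framing and the handle framing $c_i$, and treating the exceptional clasp between $L_1$ and $L_N$ as a separate sign case --- is a red flag. The conclusion $C=L$ is uniform and independent of the integers $c_i$ and of the orientation conventions of Section~\ref{s:background}: the argument is purely local near each meridian and identical for all components, including those adjacent to the exceptional clasp. Any approach in which the answer appears to hinge on parities of the $c_i$, on the twisting data carried by the annuli $A$, $A'$, $A''$ of Figure~\ref{f:3}, or on the sign $-M$ versus $M$, is either computing the wrong obstruction or will have to show at the end that all of that data cancels out. To repair the proof, replace the framing comparison along the attaching curves by the meridian argument above; the delicate bookkeeping you anticipate then disappears entirely.
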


\begin{proof} 
Let $L_{i-1}\subset L$ be 
any component of $L$, and $\mu$ an oriented meridian of $L_{i-1}$ sitting on a page $S$ of 
the open book decomposition of Proposition~\ref{p:obd}, as illustrated in the left--hand side of Figure~\ref{f:2}. 
Since $\xi_{(T,h)}$ is compatible with the open book decomposition, up to homotopy we may assume that 
the trivialization of $TY_{(T,h)}$ associated to a trivialization of $\xi_{(T,h)}$ restricts to $\mu$ as the tangent 
to $\mu$ followed by the normal to $\mu$ in $S$ and the normal to $S$. This framing of $TY_{(T,h)} |_\mu$
has a natural stabilization to a framing of $TX|_\mu$, and as such it does not extend to the cocore of the 
2--handle attached to $L_{i-1}$, therefore $L_{i-1}$ belongs to the characteristic 
sublink corresponding to $\Theta_\xi$. Since the same argument holds for each component of $L$, 
the statement is proved. 
\end{proof} 

For each component $L_i$ of $L$ there is a 2--sphere $S_i$ smoothly embedded in $X$, obtained as the union of a 
2--disc properly embedded in $B^4$ with boundary $L_i$, with the core of the 2--handle attached along 
$L_i$ with framing $c_i$. We fix an orientation of $L$ by orienting each component of $L$ in anti--clockwise fashion 
in the diagram of Figure~\ref{f:1}. This orientation of $L$ prescribes on orientation of each $S_i$ such that, if 
$v_i\in H_2(X;\Z)$ denotes the corresponding 2--homology class, 
the classes $v_1,\ldots, v_N$ form a basis of $H_2(X;\Z)$ and intersect as follows: 
\begin{equation}\label{e:inters}
v_i\cdot v_j = 
\begin{cases} 
c_i &\text{if $i=j$}\\
-1 & \text{if $\{i,j\}\neq\{1,N\}$ and $|i-j|=1$}\\
1 & \text{if $\{i,j\}=\{1,N\}$}
\end{cases} 
\end{equation}
Using this, it is also easy to check that the homology class 
\begin{equation}\label{e:w}
w:=\sum_{i=1}^N v_i\in H_2(X;\Z)
\end{equation}
is characteristic, that is $w\cdot \al \equiv \al\cdot \al\bmod 2$ for every $\al\in H_2(X;\Z)$. 
The following lemma will be used in the proofs of Propositions~\ref{p:isoemb} and~\ref{p:structure}.

\begin{lem}\label{l:posinters}
Let $(\La,\cdot)$ be an intersection lattice of rank $N\geq 2$. Suppose that $v_1,\ldots, v_N$ 
is a basis of $\La$ satisfying~\eqref{e:inters} with $c_1,\ldots, c_N\geq 2$. Then, $(\La,\cdot)$ 
is positive definite. 
\end{lem}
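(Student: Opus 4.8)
The plan is to prove positive definiteness directly at the level of the associated quadratic form. For $x=(x_1,\dots,x_N)\in\R^N$ consider
\[
Q(x):=\Big(\sum_{i=1}^N x_i v_i\Big)\cdot\Big(\sum_{j=1}^N x_j v_j\Big)=\sum_{i,j}(v_i\cdot v_j)\,x_i x_j .
\]
Since $v_1,\dots,v_N$ is a basis of $\La$, the lattice $(\La,\cdot)$ is positive definite precisely when $Q(x)>0$ for every nonzero $x$. Reading off the entries from~\eqref{e:inters}, for $N\ge 3$ the only nonzero off-diagonal terms are $v_i\cdot v_{i+1}=-1$ for $1\le i\le N-1$ together with the single entry $v_1\cdot v_N=+1$, so
\[
Q(x)=\sum_{i=1}^N c_i x_i^2-2\sum_{i=1}^{N-1}x_i x_{i+1}+2x_1 x_N .
\]
I will treat $N=2$ separately, since in that case the pair $\{1,N\}$ coincides with the unique adjacent pair and the formula above does not apply verbatim.

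The key step is the sum-of-squares identity
\[
Q(x)=\sum_{i=1}^N (c_i-2)\,x_i^2+\sum_{i=1}^{N-1}(x_i-x_{i+1})^2+(x_1+x_N)^2 ,
\]
valid for $N\ge 3$, which I would verify by expanding the right-hand side: the middle sum contributes $x_1^2+x_N^2+2\sum_{i=2}^{N-1}x_i^2-2\sum_{i=1}^{N-1}x_i x_{i+1}$, and adding $(x_1+x_N)^2=x_1^2+x_N^2+2x_1x_N$ together with $\sum_{i}(c_i-2)x_i^2$ reproduces $Q(x)$ exactly. The only point worth flagging is that it is precisely the $+1$ in the $(1,N)$ entry of~\eqref{e:inters}, rather than a $-1$, that allows the boundary terms to assemble into the perfect square $(x_1+x_N)^2$; with the opposite sign the cyclic closure would destroy definiteness.

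Positive definiteness is then immediate. Each summand on the right-hand side is a nonnegative multiple of a square, the coefficients $c_i-2$ being $\ge 0$ by hypothesis, so $Q(x)\ge 0$ for all $x$. If $Q(x)=0$ then every square must vanish: vanishing of $(x_i-x_{i+1})^2$ for all $i$ forces $x_1=x_2=\dots=x_N$, and then $(x_1+x_N)^2=(2x_1)^2=0$ gives $x_1=0$, whence $x=0$. For the remaining case $N=2$ the Gram matrix is $\left(\begin{smallmatrix}c_1 & 1\\ 1 & c_2\end{smallmatrix}\right)$, with leading principal minor $c_1\ge 2>0$ and determinant $c_1 c_2-1\ge 3>0$, so Sylvester's criterion finishes the argument. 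There is no real obstacle beyond spotting the decomposition; once the correct grouping of terms is found the computation is entirely elementary.
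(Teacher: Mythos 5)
Your proof is correct and follows essentially the same route as the paper: after using $c_i\ge 2$, the paper bounds $\xi\cdot\xi$ below by exactly your sum of squares $(x_1-x_2)^2+\cdots+(x_{N-1}-x_N)^2+(x_N+x_1)^2$ and concludes definiteness the same way. Your only deviation is the explicit separate treatment of $N=2$ via Sylvester's criterion, which is a reasonable extra care (the cyclic formula does degenerate there) that the paper glosses over.
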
 

\begin{proof} 
Let 
\[
\xi=\sum_{i=1}^N x_i v_i\in\La,\quad x_1,\ldots, x_N\in\Z. 
\]
Since $c_1\ldots,c_N\geq 2$, we have  
\begin{eqnarray*}
\xi\cdot\xi = \left(\sum_{i=1}^N x_i v_i\right)\cdot\left(\sum_{i=1}^N x_i v_i\right) = 
\sum_{i=1}^N x_i^2 c_i - 2\sum_{i=1}^{N-1} x_i\cdot x_{i+1} + 2 x_1 x_N\\
\geq 2 \sum_{i=1}^N x_i^2  - 2\sum_{i=1}^{N-1} x_i\cdot x_{i+1} + 2 x_1 x_N \\
= (x_1-x_2)^2+(x_2-x_3)^2+\cdots+(x_{N-1}-x_N)^2+(x_N+x_1)^2\geq 0.
\end{eqnarray*}
Moreover, $\xi\cdot\xi=0$ implies $x_1=\cdots=x_N=-x_1$, i.e.~$\xi=0$. This shows that $(\La,\cdot)$ 
is positive definite. 
\end{proof} 

Denote by $\D_K$ the intersection lattice $(\Z^K, -I)$, i.e.~the standard diagonal negative definite 
intersection lattice of rank $K$. 

\begin{prop}\label{p:isoemb}
Let $h$ be an element of $\Diff^+(T,\del T)$ which can be written as:
\[
h=(xy)^3 x^{a_1} y^{-b_1}\cdots x^{a_n} y^{-b_n}\quad a_i, b_i, n\geq 1.
\] 
If $\xi_{(T,h)}$ is Stein fillable, then there is an isometric embedding of intersection lattices
\[
\varphi: Q_{-X}:=(H_2(-X;\Z),\cdot) \hra\D_K,
\]
where $K = \sum_{i=1}^n a_i + 4$. Morever, $\varphi$ sends the element $w$ of~\eqref{e:w} to a characteristic element. 
\end{prop}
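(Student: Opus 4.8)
The plan is to exploit the hypothesis that $\xi_{(T,h)}$ is Stein fillable by producing a closed, smooth, oriented 4--manifold to which Donaldson's diagonalization theorem applies, and then reading off the desired embedding from the resulting splitting of the intersection form. Let $(W,J)$ be a Stein filling of $\xi_{(T,h)}$. By Proposition~\ref{p:steinfill} we know $b_2^+(W)=0$, $c_1(W,J)=0$ and $b_2^-(W)=\exp(h)-2$; since $h=(xy)^3 x^{a_1}y^{-b_1}\cdots x^{a_n}y^{-b_n}$ we have $\exp(h)=6+\sum a_i-\sum b_i=6+\sum a_i - N$, so $b_2^-(W)=4+\sum a_i - N$. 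First I would form the oriented closed 4--manifold
\[
Z := (-X)\cup_{Y_{(T,h)}} W,
\]
where $X$ is the 2--handlebody of Figure~\ref{f:1}, whose oriented boundary is $Y_{(T,h)}$; recall from~\eqref{e:inters} that $H_2(X;\Z)$ has rank $N$ with form given by the $c_i$ and the off--diagonal $\pm 1$'s. Because $\del(-X)=-Y_{(T,h)}=\del(-W)$ (orientations matching along $Y_{(T,h)}$), the manifold $Z$ is closed and oriented, and its intersection form is $Q_{-X}\oplus Q_{-W}$ up to the contribution of $H_1(Y_{(T,h)})$, which is finite since $Y_{(T,h)}$ is an $L$--space, hence a rational homology sphere; thus over $\Q$, and after checking torsion causes no obstruction, the forms add.

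Next I would verify that $Z$ is negative definite. The filling side contributes $Q_{-W}$, which is negative definite of rank $b_2^-(W)=4+\sum a_i-N$ since $b_2^+(W)=0$. For the $-X$ side, the relevant point is that $Q_X$ is positive definite: this is exactly Lemma~\ref{l:posinters}, which applies because all $c_i\geq 2$ (indeed $c_i=a_i+2\geq 3$ at the "$a$--slots" and $c_i=2$ at the padding slots, and $N\geq 2$). Hence $Q_{-X}$ is negative definite of rank $N$, and $Q_Z=Q_{-X}\oplus Q_{-W}$ is negative definite of rank $N+(4+\sum a_i-N)=4+\sum a_i=K$. Now Donaldson's theorem~\cite{Do87} applies to the closed, oriented, negative definite $Z$ and yields an isometry $Q_Z\cong \D_K=(\Z^K,-I)$. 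Composing the inclusion $H_2(-X;\Z)\hookrightarrow H_2(Z;\Z)$ with this isometry produces the required isometric embedding $\varphi\co Q_{-X}\hookrightarrow \D_K$.

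It remains to arrange that $\varphi$ sends $w=\sum_i v_i$ to a characteristic element of $\D_K$. Here I would use the Spin structure bookkeeping set up before the statement: by Lemma~\ref{l:spinstr} the Spin structure $\Theta_\xi$ determined by $\xi_{(T,h)}$ corresponds to $L$ viewed as a characteristic sublink of itself, which is precisely the homology class $w$ of~\eqref{e:w}, and $w$ is characteristic in $Q_X$. Because $c_1(W,J)=0$, the filling side contributes a characteristic class (namely $0$) that is compatible with $\Theta_\xi$ across $Y_{(T,h)}$; the upshot is that $w$, transported into $H_2(Z;\Z)$, extends to a characteristic element of $Q_Z$. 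Under an isometry $Q_Z\cong\D_K$ every characteristic element maps to a characteristic element of $\D_K$ (these are exactly the vectors with all odd coordinates), so $\varphi(w)$ is characteristic, as desired.

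The main obstacle I expect is the gluing step: checking carefully that $Q_{-X}$ embeds isometrically into $Q_Z$ rather than merely rationally, i.e.~that the finite group $H_1(Y_{(T,h)};\Z)$ does not spoil the orthogonal splitting or the integrality of the embedding, and that the characteristic class $w$ glues correctly to $c_1(W,J)=0$ to give a genuine characteristic element of the closed manifold. The positive--definiteness and the Donaldson application are then immediate once $Z$ is correctly assembled.
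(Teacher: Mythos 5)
Your proposal is correct and follows essentially the same route as the paper: glue the Stein filling $W$ to $-X$ along $Y_{(T,h)}$, invoke Proposition~\ref{p:steinfill} and Lemma~\ref{l:posinters} to get negative definiteness, apply Donaldson's theorem and count ranks, and match the $\Spin^c$ structures across the boundary via Lemma~\ref{l:spinstr} and $c_1(W,J)=0$ to obtain the characteristic condition. (One notational slip: the filling side contributes $Q_W$, not $Q_{-W}$ --- it is $Q_W$ that is negative definite because $b_2^+(W)=0$.)
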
 

\begin{proof} 
Given a Stein filling $(W,J)$ of $(Y_{(T,h)},\xi_{(T,h)})$ we can form the smooth, 
closed, oriented 4--manifold $M:=W\cup (-X)$. Proposition~\ref{p:steinfill} and Lemma~\ref{l:posinters} imply 
that the intersection lattice $Q_M:=(H_2(M;\Z),\cdot)$ is negative definite, therefore 
by Donaldson's theorem~\cite{Do87}*{Theorem~1} $Q_M$ is isomorphic to the standard diagonal 
intersection lattice of the same rank: $Q_M\cong \D_K$, where $K=b_2(M)$. Moreover, in view of 
Proposition~\ref{p:steinfill}, we have $b_2(W) = exp(h) - 2 = \sum_{i=1}^n a_i - \sum_{i=1}^n b_i + 4$, therefore 
\[
b_2(M) = b_2(W) + b_2(-X) = exp(h) - 2 + N =  4 + \sum_{i=1}^n a_i.
\]
In particular, there is an isometric embedding $\Q_{-X}\hra\D_K$. This proves the first part of the statement. 
Since the class $w$ defined by~\eqref{e:w} is characteristic, its reduction modulo $2$ is 
represented by a closed surface $\Si_w\subset X$ dual to the second Stiefel--Whitney class $w_2(X)$. 
Then, $X\setminus W$ admits a spin structure $\s$ whose restriction to $\del X=Y_{(T,h)}$ corresponds to 
$L$ viewed as a characteristic sublink of itself, and therefore equals $\Theta_\xi$ 
according to Lemma~\ref{l:spinstr}. But $\Si_w$ can be chosen to be an oriented surface representing 
an integral lift of $w_2(X)$ which is the first Chern class $c_1(\s_w)$ of the unique extension of $\s$ 
to all of $X$ as a Spin$^c$ structure $\s_w$. By construction, the restriction of $\s_w$ to $\del X$ is $\Theta_\xi$. 
Therefore, since the Spin$^c$ structure $\s_J$ on $W$ induced by the 
complex structure also restricts as $\Theta_\xi$ to $\del W=\del(-X)$, there is a Spin$^c$ structure 
$\s_M = \s_J\cup \s_w$ on $M$ whose first Chern class $c_1(\s_M)$ vanishes on $W$ by Proposition~\ref{p:steinfill} 
and restricts to $X$ as $c_1(\s_w)$. This shows that $\varphi(w)$ is the Poincar\'e dual of $c_1(\s_M)$ and therefore 
is characteristic. 
\end{proof}

\section{The proof of Theorems~\ref{t:newmain} and~\ref{t:main}}\label{s:final}

In this section we first derive some crucial consequences from Proposition~\ref{p:isoemb} and then 
we use them to prove Theorems~\ref{t:newmain} and~\ref{t:main}. 

Let $v_1,\ldots, v_N$ be the basis of $Q_{-X}$ chosen as in the previous section and 
satisfying~\eqref{e:inters}. Let $\varphi$ denote an isometric embedding as in Proposition~\ref{p:isoemb}, and denote by 
$\overline w\in \D_K$ the image of $w=\sum_{i=1}^N v_i\in Q_{-X}$ under $\varphi$. The element $\overline w$ 
has the same square as $w$, that is 
\begin{equation}\label{e:wsquare}
\overline w\cdot \overline w = w\cdot w = \sum_{i=1}^n (a_i+2) + 2\sum_{i=1}^n (b_i-1) - 2(\sum_{i=1}^n b_i - 1) + 2 =
4 + \sum_{i=1}^n a_i = K.
\end{equation}
Since $\overline w$ is characteristic, there is a basis 
$e_1,\ldots, e_K\in\D_K$ such that $e_i\cdot e_i = -\de_{ij}$ for every $i, j$  
and $\overline w = \sum_{i=1}^K e_i$. Let $\overline v_1,\ldots, \overline v_N\in\D_K$ 
be the images of, respectively, $v_1,\ldots, v_N$ under $\varphi$. We can define a $K\x N$ matrix $M=(m_{ij})$ by 
expressing each vector $\overline v_j$ in terms of the $e_i$'s:
\begin{equation}\label{e:M}
\overline v_j = \sum_{i=1}^K m_{ij} e_i,\quad j=1,\ldots, N.
\end{equation}
Observe that, since $\overline w = \sum_{i=1}^N \overline v_i$, we have 
\begin{equation}\label{e:whits}
\overline w\cdot \overline v_i = 
\begin{cases} 
\overline v_i \cdot \overline v_i & \text{if $i \in \{1, N\}$},\\
\overline v_i \cdot \overline v_i + 2 &  \text{if $i \not\in \{1, N\}$}.
\end{cases} 
\end{equation}

\begin{lem}\label{l:mij}
Let $M=(m_{ij})$ be the $K\x N$ matrix defined by~\eqref{e:M}.
\begin{enumerate}
\item
For each $j\in\{2,\ldots, N-1\}$ there is a unique index $\tau(j)\in\{1,\ldots, K\}$ such that\\ $m_{\tau(j) j}\in \{-1,2\}$; 
\item
For each $(i,j)\in\{1,\ldots, K\}\x \{1,N\}$ and for each $(i,j)\in\{1,\ldots, K\}\x \{2,\ldots, N-1\}$ with $i\neq\tau(j)$ 
we have $m_{ij}\in\{0,1\}$.
\end{enumerate}
\end{lem}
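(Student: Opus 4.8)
The plan is to read off, for each column $j$, two scalar invariants of the vector $\overline v_j=\sum_i m_{ij}e_i$ and to combine them into the single quantity $\sum_i m_{ij}(m_{ij}-1)$, whose value pins down the entries $m_{ij}$ completely. First I would record that, since $\varphi$ is an isometric embedding into the negative definite lattice $\D_K$ and the basis $v_1,\ldots,v_N$ realizes~\eqref{e:inters}, the expansion~\eqref{e:M} gives $\sum_{i=1}^K m_{ij}^2=-\,\overline v_j\cdot\overline v_j=c_j$ and $\sum_{i=1}^K m_{ij}=-\,\overline w\cdot\overline v_j$, the latter because $\overline w=\sum_{i=1}^K e_i$ with $e_i\cdot e_k=-\de_{ik}$. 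Feeding~\eqref{e:whits} into the second identity then yields the column sums
\[
\sum_{i=1}^K m_{ij}=
\begin{cases}
c_j & \text{if } j\in\{1,N\},\\
c_j-2 & \text{if } j\in\{2,\ldots,N-1\}.
\end{cases}
\]

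The decisive step is to subtract these from the sum of squares. Setting $P_j:=\sum_{i=1}^K m_{ij}(m_{ij}-1)=\sum_{i=1}^K m_{ij}^2-\sum_{i=1}^K m_{ij}$, the computation above gives $P_j=0$ for $j\in\{1,N\}$ and $P_j=2$ for $j\in\{2,\ldots,N-1\}$. The point I would then exploit is the elementary fact that, for an integer $m$, the product $m(m-1)$ is a non-negative even integer which equals $0$ exactly when $m\in\{0,1\}$, equals $2$ exactly when $m\in\{-1,2\}$, and is at least $6$ otherwise; hence each summand of $P_j$ lies in $\{0,2,6,12,\ldots\}$.

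With this in hand both assertions follow at once. For $j\in\{1,N\}$ the vanishing $P_j=0$ forces every summand to vanish, hence $m_{ij}\in\{0,1\}$ for all $i$, which is the case $j\in\{1,N\}$ of part~(2). For $j\in\{2,\ldots,N-1\}$ the equality $P_j=2$, together with the fact that every summand is either $0$ or at least $2$, forces exactly one summand to equal $2$ and all the others to vanish: the index of that summand is the required $\tau(j)$, and by the characterization above $m_{\tau(j)j}\in\{-1,2\}$ (part~(1)) while $m_{ij}\in\{0,1\}$ for $i\neq\tau(j)$ (the remaining case of part~(2)). I expect the only delicate point to be the bookkeeping of signs in passing from~\eqref{e:inters} and~\eqref{e:whits} to the two displayed invariants; once the quantity $m(m-1)$ is isolated, the conclusion is forced and completely elementary.
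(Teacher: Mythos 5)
Your proof is correct and follows essentially the same route as the paper: both arguments reduce the statement to the identity $\sum_i m_{ij}(m_{ij}-1)=0$ or $2$ (obtained from~\eqref{e:whits} and the fact that $\overline w=\sum_i e_i$) together with the elementary observation that $m(m-1)$ equals $0$ for $m\in\{0,1\}$, equals $2$ for $m\in\{-1,2\}$, and exceeds $2$ otherwise. The only cosmetic difference is that you evaluate the column sums explicitly in terms of $c_j$ before subtracting, whereas the paper cancels them directly.
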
 

\begin{proof} 
Note that for every $(i,j)\in\{1,\ldots, K\}\x \{1,\ldots, N\}$ we have
\begin{equation*}
m_{ij} (m_{ij} - 1) =
\begin{cases} 
0 & \text{if $m_{ij}\in\{0,1\}$},\\
2 & \text{if $m_{ij}\in\{-1, 2\}$}
\end{cases} 
\end{equation*}
and $m_{ij} (m_{ij} - 1)> 2$ if $m_{ij}\not\in\{-1,0,1,2\}$. 
Now, if $j\in\{1,N\}$ by~\eqref{e:whits} 
\[
\sum_{i=1}^N m_{ij}^2 = -\overline v_j\cdot\overline v_j = -\overline w\cdot \overline v_j = 
-\left(\sum_i^K e_i\right)\cdot\left(\sum_i^K m_{ij} e_i\right) = \sum_i^K m_{ij}, 
\]
therefore 
\begin{equation}\label{e:m(m-1)=0}
\sum_{i=1}^K m_{ij} (m_{ij} - 1) = 0.
\end{equation}
Equation~\eqref{e:m(m-1)=0} implies that $m_{ij}\in\{0,1\}$ when $(i,j)\in\{1,\ldots, K\}\x \{1,N\}$. 
Similarly, when $(i,j)\in\{1,\ldots, K\}\x \{2,\ldots, N-1\}$ from~\eqref{e:whits} we obtain
\[
\sum_{i=1}^K m_{ij} (m_{ij} - 1) = 2,
\]
which implies that there is a unique index $\tau(j)$ such that $m_{\tau(j) j}\in \{-1,2\}$, while 
for $i\neq\tau(j)$ we must have $m_{ij}\in\{0,1\}$. 
\end{proof} 

\begin{prop}\label{p:structure}
Let $(c_1,\ldots, c_N)$ be the $N$--tuple defined in~\eqref{e:cstring}, $N\geq 2$. Then, there is a sequence of blowups 
$(s_1,\ldots,s_N)\rightarrow\stackrel{\text{blowup}}{\cdots}\rightarrow (0,0)$ such that 
$c_1\geq s_1,\ c_2\geq s_2,\ \ldots, c_N\geq s_N$.  
\end{prop}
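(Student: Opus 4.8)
The plan is to read the desired blowup sequence off the combinatorics of the embedding matrix $M=(m_{ij})$ of~\eqref{e:M} (available once $\xi_{(T,h)}$ is Stein fillable, by Proposition~\ref{p:isoemb}), by an induction on $N$ in which each step undoes a single blowup. The driving constraint is the identity $\overline w=\sum_{i=1}^K e_i$ recorded before Lemma~\ref{l:mij}: since also $\overline w=\sum_{j=1}^N\overline v_j=\sum_{i=1}^K\bigl(\sum_{j=1}^N m_{ij}\bigr)e_i$, comparing coefficients yields the \emph{row--sum condition}
\[
\sum_{j=1}^N m_{ij}=1\qquad\text{for every }i\in\{1,\ldots,K\}.
\]
In the basis $e_1,\ldots,e_K$ the relations~\eqref{e:inters} become $\sum_i m_{ij}^2=c_j$ together with $\sum_i m_{ij}m_{ij'}\in\{0,\pm1\}$, the value being nonzero exactly when $j,j'$ are consecutive or $\{j,j'\}=\{1,N\}$. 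Fed into Lemma~\ref{l:mij}, the row--sum condition is extremely rigid, and the first step is to exploit it to classify the rows of $M$.

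First I would show that each row of $M$ is either a \emph{singleton} $e_p$, with a single entry $1$, or a \emph{triple} $-e_j+e_p+e_q$, with one entry $-1$ in a middle column $j\in\{2,\ldots,N-1\}$ and two entries $1$ in columns $p,q$. A row containing the special value $2$ would need a compensating negative entry to keep its sum equal to $1$, and playing $\sum_i m_{ij}^2=c_j$ against $\sum_i m_{ij}m_{ij'}\in\{0,\pm1\}$ shows this cannot happen; the same analysis forces every special entry to equal $-1$ and the index map $\tau$ of Lemma~\ref{l:mij} to be injective. Counting then gives exactly $N-2$ triple rows, one per middle column, and $K-(N-2)$ singleton rows --- matching the fact that a length--$N$ blowup sequence is obtained from $(0,0)$ by exactly $N-2$ blowups.

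Next I would set up the induction, treating the indices cyclically so that the closing edge $\{1,N\}$ of~\eqref{e:inters} is on the same footing as the others. I would locate a middle column $j^\ast$ which is a \emph{leaf} of the triple structure: its own triple is $-e_{j^\ast}+e_{j^\ast-1}+e_{j^\ast+1}$, with the two $+1$'s at its cyclic neighbours, and no other triple is supported on column $j^\ast$. Then every nonzero entry of column $j^\ast$ lies either in that triple row or in a singleton row $e_{j^\ast}$, so I may delete column $j^\ast$ together with all $c_{j^\ast}$ rows supported on it; the surviving rows have a zero in column $j^\ast$, hence still sum to $1$, and one checks that deleting the triple row lowers $c_{j^\ast-1}$ and $c_{j^\ast+1}$ by one, creates the required $\pm1$ overlap between the now--adjacent columns $j^\ast-1$ and $j^\ast+1$, and leaves all other entries of~\eqref{e:inters} untouched. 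Thus the reduced matrix is admissible data for a length--$(N-1)$ cyclic configuration $(c'_1,\ldots,c'_{N-1})$ with $c'_{j^\ast\pm1}=c_{j^\ast\pm1}-1$ and all other $c'_j=c_j$. By induction it dominates a blowup sequence $(s'_1,\ldots,s'_{N-1})\to\cdots\to(0,0)$; blowing up again at position $j^\ast$ returns a length--$N$ sequence $(s_1,\ldots,s_N)$, and the inequalities $c_j\ge s_j$ are immediate, since $s_{j^\ast}=1\le c_{j^\ast}$, $s_{j^\ast\pm1}=s'_{j^\ast\pm1}+1\le c'_{j^\ast\pm1}+1=c_{j^\ast\pm1}$, and the remaining entries are unchanged. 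The base case $N=2$ is the empty sequence $(0,0)$, dominated by any $(c_1,c_2)$.

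The crux is guaranteeing the existence of a leaf column $j^\ast$, that is, a triple whose two $+1$'s sit at its cyclic neighbours and which carries the only occurrence of its column among the triples. I expect this to come from an extremal argument on the nesting that the $N-2$ triples impose on the cyclic arrangement of columns --- the ``innermost'' blowup should always be removable --- but turning this into a clean statement, and simultaneously checking that the reduced matrix remains an honest embedding of a nondegenerate cyclic configuration (all $c'_j\ge1$, correct sign on the closing edge, the structure of Lemma~\ref{l:mij} preserved), is where the real work lies. For this reason I would phrase the inductive hypothesis not just for tuples of the form~\eqref{e:cstring} but for the whole class of cyclic configurations admitting such an embedding. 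The secondary technical point, namely the exclusion of the special value $2$ and the injectivity of $\tau$ in the row classification, is also delicate and rests on the same interplay between the norm identities $\sum_i m_{ij}^2=c_j$ and the bounded overlaps $\sum_i m_{ij}m_{ij'}\in\{0,\pm1\}$.
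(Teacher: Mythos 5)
Your overall strategy---exploiting the row--sum identity $\sum_{j}m_{ij}=1$ forced by $\overline w=\sum_i e_i$, classifying the rows of $M$, and peeling off one blowup at a time---is in the right spirit, but as written it has two genuine gaps, both of which you candidly flag and neither of which is routine. First, the row classification into singletons $e_p$ and triples $-e_j+e_p+e_q$ requires excluding the value $2$ at the special position $\tau(j)$ of Lemma~\ref{l:mij} and proving that $\tau$ is injective (equivalently, that no row carries two $-1$'s); you assert that this ``cannot happen'' by playing $\sum_i m_{ij}^2=c_j$ against the bounded overlaps, but a row of the form $2-1+0+\cdots$ or $-1-1+1+1+1+0+\cdots$ has row sum $1$ and is not obviously excluded by those constraints alone. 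Second, and more seriously, the existence of a \emph{leaf} column $j^\ast$---a triple $-e_{j^\ast}+e_{j^\ast-1}+e_{j^\ast+1}$ supported exactly at the cyclic neighbours and appearing in no other triple---is the entire content of the inductive step, and you leave it as an expectation. Nothing in the classification forces the two $+1$'s of a triple to sit at the adjacent columns, and organizing the $N-2$ triples into a nested structure with a removable innermost one is precisely the combinatorial heart of the problem; without it the induction does not start.

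The paper's proof avoids both difficulties by running the reduction in the other direction. It needs only the weak consequence of Lemma~\ref{l:mij} and~\eqref{e:summij} that any row with more than one nonzero entry must contain a $-1$, together with the count that at most $N-2$ rows can contain a $-1$ while $M$ has $K\geq N$ rows. Hence some row is a singleton $e_p$ with entry $1$; deleting it preserves~\eqref{e:summij} and~\eqref{e:whits}, so the argument iterates, shrinking $K$ until one reaches an $(N-2)\times N$ matrix whose columns satisfy~\eqref{e:inters} with the diagonal unconstrained. At that point Lemma~\ref{l:posinters} (negative definiteness would force rank $\geq N$, impossible inside $\D_{N-2}$) guarantees a column of square $-1$, which is contracted, and the process repeats down to $\D_1$; the blowup sequence $(s_1,\ldots,s_N)$ and the inequalities $c_i\geq s_i$ are then read off by reversing the deletions. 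In effect, the definiteness obstruction is what produces your ``innermost blowup'' for free, so the leaf--existence problem never arises and no fine classification of the rows is needed. If you want to complete your version, you would essentially have to reprove this lattice--theoretic fact by hand; I would recommend instead adopting the row--deletion--then--contraction order.
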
 

\begin{proof} 
If $N=2$ there is nothing to prove. Hence, I will assume $N\geq 3$. 
Let $M=(m_{ij})$ be the $K\x N$ matrix defined by~\eqref{e:M}. For each $i=1,\ldots, K$ we have 
\begin{equation}\label{e:summij}
\sum_{j=1}^N m_{ij} = -e_i\cdot \sum_{j=1}^N \overline v_j = -e_i\cdot\overline w = -e_i\cdot\sum_{k=1}^N e_k =1.
\end{equation} 
Note that, by Lemma~\ref{l:mij} and~\eqref{e:summij}, if a row of $M$ contains more than one 
nonzero entry then one of those entries equals $-1$. On the other hand, by Lemma~\ref{l:mij} 
at most $N-2$ entries of $M$ are equal to $-1$. But since $\exp(h) = 2 + K - N \geq 2$, we have 
$K\geq N$, i.e.~the matrix $M$ has at least $N$ rows. We conclude that a row $R$ of $M$ has a single 
nonzero entry, which by~\eqref{e:summij} must be equal to $1$. Deleting $R$ from $M$ we obtain a new 
matrix $M' = (m'_{ij})$ having $K-1$ rows and $N$ columns. The $m'_{ij}$'s still satisfy~\eqref{e:summij}. Moreover, 
we can use the $m'_{ij}$'s as in~\eqref{e:M} to define elements $\overline v'_j\in\D_{K-1}$, $j=1,\ldots, N$. 
Note that $\overline w' := \sum_{j=1}^N \overline v'_j$  and the $\overline v'_j$'s intersect as in~\eqref{e:whits}. 
Therefore the proof of Lemma~\ref{l:mij} goes trough and, since $K-1\geq N-1 > N-2$, we can reapply the argument 
just used to conclude that a row of $M'$ has a single nonzero entry equal to $1$. Then we can delete that row 
obtaining a new matrix, reapply the same argument and 
so on. If we keep going this way until we can, i.e.~until we obtain a matrix $M''$ with $N-2$ rows and $N$ columns,  
the elements $\overline v''_1,\ldots, \overline v''_N\in\D_{N-2}$ defined by the columns of $M'' = (m''_{ij})$ will satisfy 
\begin{equation}\label{e:intpatt}
v''_i\cdot v''_j = 
\begin{cases} 
-1 & \text{if $\{i,j\}=\{1,N\}$}\\
1 & \text{if $\{i,j\}\neq\{1,N\}$ and $|i-j|=1$}.
\end{cases} 
\end{equation}
In view of Lemma~\ref{l:posinters} we must necessarily have $\overline v''_j\cdot \overline v''_j = -1$ 
for some $j\in\{1,\ldots, N\}$. In particular $m''_{ij} =\pm 1$ for some $i$ and $m''_{sj}=0$ for $s\neq i$. 
Erasing the $i$--th row and the $j$--th column of $M''$ we get a matrix whose columns define $N-1$ 
elements of $\D_{N-3}$ which intersect as in~\eqref{e:intpatt}. Since we are assuming $N\geq 3$, 
we can keep going in the same way until we have three elements in $\D_1$ intersecting each other 
in the usual way and having all square $-1$. Reconstructing backwards the various steps it is easy 
to check that $\overline v''_1,\ldots, \overline v''_N\in\D_{N-2}$ have self--intersections $(-s_1,\ldots, -s_N)$, 
with $(s_1,\ldots,s_N)\rightarrow\stackrel{\text{blowup}}{\cdots}\rightarrow (0,0)$, 
and that $c_i\geq s_i$ for each $i=1,\ldots, N$. This concludes the proof. 
\end{proof} 

\begin{proof}[Proof of Theorems~\ref{t:newmain} and~\ref{t:main}]
If $N=1$ then $n=b_1=1$ and $h$ is clearly positive, therefore we may assume $N\geq 2$. 
As we recalled in Section~\ref{s:intro}, the fact that $(1)\Rightarrow (2)$ is well known. 
Moreover, by Proposition~\ref{p:3implies1}  we have the implication $(3)\Rightarrow (1)$ of Theorem~\ref{t:newmain} 
and by Proposition~\ref{p:structure} we have $(2)\Rightarrow (3)$. This concludes the proof 
of Theorem~\ref{t:newmain}, which together with Proposition~\ref{p:steinfill} implies Theorem~\ref{t:main}. 
\end{proof} 

\begin{rem}\label{r:highergenus}
As pointed out by John Etnyre~\cite{Epc13}, the fact that there exist Stein fillable, non--positive open books $(\Si, h)$ of any genus $g(\Si)\geq 2$ can be proved as follows.  Let $(\Si_1, f_1)$ be any positive open book with $\Si_1$ equal to
a one--holed torus, and let $(\Si_2, f_2)$ be one of the Stein fillable, non--positive examples from~\cite{Wa09} or~\cite{BEV10}
with $g(\Si_2)=2$. Consider a boundary connected sum $(\Si = \Si_1\natural\Si_2, f_1\natural f_2)$ (it doesn't matter which component of $\del\Si_2$ is involved in the sum).  Then, $\xi_{(\Si, f_1\natural f_2)}$ is the contact connected sum of $\xi_{(\Si_1,f_1)}$ and $\xi_{(\Si_2,f_2)}$ and therefore $(\Si, f_1\natural f_2)$ is Stein fillable. Moreover, there is a properly embedded arc $a\subset\Si$ with endpoints on the same boundary component $C$ of $\del\Si$, such that the an open neighborhood $N$ of $a\cup C$ in $\Si$ is a pair of pants whose complement  is homeomorphic to the disjoint union of $\Si_1$ and $\Si_2$. By construction $f_1\natural f_2$ has a representative which restricts to $N$ as the identity. Suppose by contradiction that $f_1\natural f_2$ can be written as a composition of right--handed Dehn twists $\de_{C_1}\circ\cdots\circ\de_{C_k}$. It is easy to check that if $C_i\cap a\not=\emptyset$ for some 
$i\in\{1,\ldots,k\}$, then the arc $a$ is sent by $f_1\natural f_2$ ``to the right''  in the sense of~\cite{HKM07}. On the other hand, 
by construction $f_1\natural f_2(a) = a$, which is not to the right of $a$. This implies that $C_i\cap a=\emptyset$ 
for each $i$, and therefore $f_1\natural f_2 = P_1\natural P_2$, where $P_i:\Si_i\to\Si_i$, $i=1,2$, is a positive diffeomorphism. 
But the map $(f_1, f_2)\mapsto f_1\natural f_2$ is a group homomorphism, thus applying~\cite{PR00}*{Corollary~4.2 (iii)} one can easily show that it is injective. We conclude that $f_2$ is positive, contrary to the initial assumption. Repeating the same  
construction sufficiently many times one can construct Stein fillable, non--positive open books with pages of 
any genus strictly bigger than one. 
\end{rem} 

\bibliography{biblio}

\end{document}